\crefname{equation}{}{}
\numberwithin{equation}{section}
\theoremstyle{plain}
\newtheorem{theorem}{Theorem}[section]
\newtheorem{lemma}[theorem]{Lemma}
\newtheorem{corollary}[theorem]{Corollary}
\newtheorem{proposition}[theorem]{Proposition}
\newtheorem{question}[theorem]{Question}
\theoremstyle{definition}
\newtheorem{definition}[theorem]{Definition}
\newtheorem{example}[theorem]{Example}
\newtheorem{remark}[theorem]{Remark}
\title[New Results on Nyldon Words and Nyldon-like Sets]{New Results on Nyldon Words and Nyldon-like Sets}
\author{Swapnil Garg}
\address{Massachusetts Institute of Technology, Cambridge, MA 02139, USA}
\email{swapnilg@mit.edu}
\date{October 2021}
\subjclass{05A05, 68R15, 08A50, 20M05}
\begin{document}

\maketitle
\begin{abstract}
Grinberg defined Nyldon words as those words which cannot be factorized into a sequence of lexicographically nondecreasing smaller Nyldon words. He was inspired by Lyndon words, defined the same way except with ``nondecreasing'' replaced by ``nonincreasing.'' Charlier, Philibert, and Stipulanti proved that, like Lyndon words, any word has a unique nondecreasing factorization into Nyldon words. They also show that the Nyldon words form a right Lazard set, and equivalently, a right Hall set. In this paper, we provide a new proof of unique factorization into Nyldon words related to Hall set theory and resolve several questions of Charlier, Philibert, and Stipulanti. In particular, we prove that Nyldon words of a fixed length form a circular code, we prove a result on factorizing powers of words into Nyldon words, and we investigate the Lazard procedure for generating Nyldon words. We show that these results generalize to a new class of Hall sets, of which Nyldon words are an example, that we name ``Nyldon-like sets'', and show how to generate these sets easily.
\end{abstract}

\section{Introduction}
Nyldon words were introduced in 2014 by Darij Grinberg \cite{grinberg}, with the name a play on the related Lyndon words, first studied in the 1950s by Shirshov \cite{shirshov} and Lyndon \cite{lyndon}. While Lyndon words were first defined as those words which are the smallest among their cyclic rotations, the Chen-Fox-Lyndon Theorem states that any word can be written uniquely as a sequence of lexicographically nonincreasing Lyndon words. That is, we can write $w=\ell_1\ell_2\cdots \ell_k$, where $\ell_1 \ge_{\text{lex}} \ell_2 \ge_{\text{lex}} \cdots  \ge_{\text{lex}} \ell_k$ \cite{chen, sirsov}. In a sense, Lyndon words can act as ``primes'' in the factorization of all words. Thus, Lyndon words can also be defined recursively as those words which are either single letters, or which cannot be factorized into a sequence of nonincreasing smaller Lyndon words. By changing the word ``nonincreasing'' to ``nondecreasing'' in this definition, we arrive at Nyldon words, which behave in a surprisingly different way. For example, it is much more difficult to determine whether a word is Nyldon from looking at its cyclic rotations.

In \cite{nyldon}, Charlier, Philibert, and Stipulanti prove an analog of the Chen-Fox-Lyndon Theorem, showing that all words have a unique nondecreasing factorization into Nyldon words. They also give an algorithm for computing the Nyldon factorization of a word, investigate the differences between Nyldon and Lyndon words, and show that Nyldon words form a Hall set; see the end of Section 3 for more information on Hall sets. As unique factorization holds for Nyldon  words, they seem to behave more nicely than another variant of Lyndon words studied recently, the inverse Lyndon words \cite{inverse}.

Lyndon words form a Hall set, with the ordering given by the standard lexicographical ordering. If we instead use the reverse lexicographical ordering to construct a Hall set, we arrive at Nyldon words, showing that these words arise in a quite natural way. As Lyndon introduced his namesake words with the intention of giving bases of free Lie algebras, Nyldon words could also shed light into this area, since they form a Hall set as well.

In this paper, we resolve several questions posed by Charlier, Philibert, and Stipulanti in \cite{nyldon}. In Section 3, we present the algorithm of Melan\c{c}on, which originates in Hall set theory but is arrived at naturally by considering certain factorizations of the free monoid. Specifically, we define a new kind of factorization of the free monoid called a Nyldon-like set. Our definition is a recursive method of generating such sets, and as Nyldon-like sets are shown to be Hall sets, our method is a novel way to generate examples of Hall sets. We demonstrate the power of Melan\c{c}on's algorithm in elucidating certain properties of Nyldon words, Nyldon-like sets, and Hall sets in Section 4. In Section 5, we show that the factorization algorithm conceived in \cite{nyldon} is faster than previously thought. In Section 6, we compute how long the right Lazard procedure takes to generate all the Nyldon words over a given alphabet up to a given length. Finally, in Section 7, we prove a property of Lyndon words using only the recursive definition, answering a question of Charlier et al.\ in \cite{nyldon}.

\section{Background}
Throughout this paper, let $A$ be an alphabet endowed with a total order $<$, with a size at least $2$. We will only work with finite alphabets, and we write a finite alphabet $A$ of size $m$ as $\{0, 1, \dots, m-1\}$ with $0 < 1 < \cdots  < m-1$. The product $uv$ of two words $u, v$ is their concatenation, and the expression $uv^{-1}$ (resp. $v^{-1}u$) is equal to the word $w$ such that $wv=u$ (resp. $vw=u$), so $uv^{-1}$ (resp. $v^{-1}u$) is only defined if $u$ ends with $v$ (resp. begins with $v$). Let $<_\text{lex}$ denote the lexicographical order on words, where $u <_\text{lex} v$ if either $u$ is a proper prefix of $v$, or there exist a word $p$ and letters $i, j \in A$ such that $pi, pj$ are prefixes of $u, v$ respectively, and $i<j$. Let $A^*$ be the set of all finite words (including the empty word $\varepsilon$) over $A$, and let $A^{+}=A^* \backslash \varepsilon$. Let the length of a finite word $w$ be denoted by $|w|$. A word is \textit{primitive} if it is not a power of another word, and a word $w$ is a \textit{conjugate} (or \textit{cyclic rotation}) of $x$ if $w=uv, x=vu$ for words $u, v$.

\begin{definition}\label{def:nyldon} \cite{grinberg, lothaire}
A nonempty word $w$ is Nyldon (resp. Lyndon) if $w=a \in A$ or $w$ cannot be factorized as $(w_1, w_2, \dots, w_k)$ where $w_1, w_2, \dots, w_k$ are Nyldon (resp. Lyndon), $k \ge 2$, and $w_1 \le_{\text{lex}} w_2 \le_{\text{lex}} \dots \le_{\text{lex}} w_k$ (resp. $w_1 \ge_{\text{lex}} w_2 \ge_{\text{lex}} \dots \ge_{\text{lex}} w_k$). Such a factorization is referred to as a Nyldon (resp. Lyndon) factorization (of $w$).
\end{definition}
We provide a table of short binary Nyldon words for reference in \cref{tab:nyldontable}.
\begin{remark}
If $w$ itself is Nyldon (resp. Lyndon), then its Nyldon (resp. Lyndon) factorization is just $(w)$.
\end{remark}

Nyldon words were first extensively studied by Charlier et al., who proved that Nyldon factorization was unique using the following two lemmas:

\begin{lemma}\label{lem:nyldonsuffix} \cite{nyldon}
For a Nyldon word $x$ with a proper Nyldon suffix $s$ (i.e. a suffix $s \neq x$), we have $s <_{\text{lex}} x$.
\end{lemma}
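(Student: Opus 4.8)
The plan is to argue by strong induction on $|x|$, so that the statement may be assumed for every Nyldon word shorter than $x$. Write $x = ps$ with $p \in A^{+}$ (nonempty since $s$ is a \emph{proper} suffix), and fix any Nyldon factorization $(p_1, \dots, p_j)$ of $p$ (one exists by \cref{def:nyldon}), so that $p_1 \le_{\text{lex}} \cdots \le_{\text{lex}} p_j$. Then $(p_1, \dots, p_j, s)$ is a factorization of $x$ into Nyldon words, and since $x$ is Nyldon this sequence cannot be nondecreasing; as $p_1 \le_{\text{lex}} \cdots \le_{\text{lex}} p_j$ already holds, the only possible failure is $p_j >_{\text{lex}} s$, i.e. $s <_{\text{lex}} p_j$. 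This is the one inequality that comes for free, and the rest of the proof is built on top of it.

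The heart of the argument is to show that in fact $j = 1$, i.e. that the prefix $p$ is itself Nyldon. Granting this, the proof finishes immediately: we have $s <_{\text{lex}} p_j = p_1 = p$, and since $x = ps$ begins with $p$, comparing $s$ with $x$ letter by letter gives $s <_{\text{lex}} x$ (either $s$ is a proper prefix of $p$, hence a proper prefix of $x$, or $s$ and $p$ first differ at a position carrying the smaller letter in $s$, and $x$ agrees with $p$ there). To force $j = 1$, I would use the following \emph{concatenation lemma}: if $a$ and $b$ are Nyldon and $b <_{\text{lex}} a$, then $ab$ is Nyldon. Assuming it, suppose $j \ge 2$; since $s <_{\text{lex}} p_j$, the lemma gives that $p_j s$ is Nyldon, and then $(p_1, \dots, p_{j-1}, p_j s)$ is a factorization of $x$ into Nyldon words which is nondecreasing (because $p_{j-1} \le_{\text{lex}} p_j \le_{\text{lex}} p_j s$, the last inequality holding as $p_j$ is a prefix of $p_j s$) and has $j \ge 2$ parts, contradicting that $x$ is Nyldon. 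Hence $j = 1$.

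The main obstacle is thus the concatenation lemma, which I would prove together with \cref{lem:nyldonsuffix} by a single strong induction on total length, so that in treating a word of length $n$ I may invoke either statement for all smaller lengths. (The deduction above applies the concatenation lemma only to the strictly shorter word $p_j s$, so there is no circularity.) To prove it for $ab$, assume for contradiction that $ab = c_1 \cdots c_m$ is a nondecreasing Nyldon factorization with $m \ge 2$, and examine how the boundary between $a$ and $b$ sits relative to the factors $c_i$. If it falls at a factor boundary, then $a$ and $b$ are each products of consecutive $c_i$'s; since $a$ and $b$ are Nyldon these products are trivial, forcing $a = c_1 \le_{\text{lex}} c_2 = b$, which contradicts $b <_{\text{lex}} a$.

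The delicate case is when the boundary lies strictly inside some factor $c_t = \alpha\beta$, with $\alpha$ a nonempty suffix of $a$ and $\beta$ a nonempty prefix of $b$. Here I would pass to the factorization of $a$ obtained from $c_1, \dots, c_{t-1}$ together with a Nyldon factorization of $\alpha$: since $a$ is Nyldon this factorization has a descent, and the inductive concatenation lemma lets me merge the two Nyldon words straddling that descent into a single Nyldon word, reducing the number of factors while preserving the product. Iterating this merging is intended either to produce a genuine nondecreasing Nyldon factorization of $a$ (or symmetrically of $b$) with at least two parts, contradicting that $a$ (or $b$) is Nyldon, or to collapse everything and again force $a \le_{\text{lex}} b$. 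The suffix comparisons needed to control the extreme factors $c_1$ and $c_m$ — namely that a proper Nyldon suffix of a shorter Nyldon word is strictly smaller — are exactly the inductive instances of \cref{lem:nyldonsuffix}, which is precisely why the two results must be carried through the induction in tandem; getting the bookkeeping of this merging and descent analysis right is where I expect the real work to lie.
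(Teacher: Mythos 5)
Your opening reduction is sound: writing $x=ps$, taking any Nyldon factorization $(p_1,\dots,p_j)$ of $p$, and noting that $(p_1,\dots,p_j,s)$ cannot be nondecreasing does give $s<_{\text{lex}}p_j$ for free. The proof collapses, however, at the concatenation lemma on which everything else is built: the claim that $a,b$ Nyldon with $b<_{\text{lex}}a$ implies $ab$ Nyldon is \emph{false}. Take $a=101$ and $b=0$: both are Nyldon and $0<_{\text{lex}}101$, yet $ab=1010=(10)(10)$ admits a nondecreasing factorization into Nyldon words, so it is not Nyldon. Nor is periodicity the only obstruction: $a=10010$, $b=1$ gives $ab=100101=(100)(101)$, which is primitive but still not Nyldon. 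This failure of concatenation is precisely one of the ways Nyldon words differ from Lyndon words (for which the analogous statement, $a<_{\text{lex}}b$ implies $ab$ Lyndon, is true and does support short arguments of the kind you attempt), so no bookkeeping in your "delicate case" can rescue the lemma.

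Worse, the intermediate goal itself --- that $j=1$, i.e.\ that the complementary prefix $p$ of a proper Nyldon suffix of a Nyldon word must be Nyldon --- is also false, so the strategy cannot be repaired by weakening the concatenation lemma. From \cref{tab:nyldontable}, $x=1011010$ is Nyldon and $s=0$ is a proper Nyldon suffix, but $p=101101=(101)(101)$ is not Nyldon; here $j=2$, and the factorization $(101,101,0)$ of $x$ has its only descent at the last step, exactly as your free inequality predicts, with no contradiction to be had. The paper's proof goes an entirely different way: it develops Melan\c{c}on's contraction algorithm for Nyldon-like sets (\cref{lem:firstblock} through \cref{lem:contraction}), shows that every $G$-word suffix of a word appears as a block at some stage of the algorithm (\cref{lem:melanconsuffix}), and then concludes $s\prec x$ in \cref{cor:suffix2} because each contraction of blocks $f,g$ produces $fg\succ g$, so the successive suffix blocks strictly increase up to $x$ itself. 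That machinery is what substitutes for the (unavailable) concatenation lemma, and it has the added benefit of proving the statement for every Nyldon-like set at once.
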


\begin{lemma}\label{lem:longestsuffix} \cite{nyldon}
In the Nyldon factorization of the word $x$ as $(n_1, n_2, \dots, n_k)$, the Nyldon word $n_k$ is the longest Nyldon suffix of $x$.
\end{lemma}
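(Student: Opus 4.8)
The plan is to argue by strong induction on $|x|$, proving for \emph{any} Nyldon factorization $(n_1, \dots, n_k)$ of $x$ (not just a canonical one) that its last factor $n_k$ equals the longest Nyldon suffix of $x$. Since the longest Nyldon suffix is an intrinsic quantity of $x$, I would not assume uniqueness of the factorization anywhere, which is important because this lemma is precisely one of the ingredients used to establish uniqueness; the only prior result I would invoke is \Cref{lem:nyldonsuffix} together with the (elementary) existence of Nyldon factorizations.

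First I would dispose of the base case $|x|=1$ and the case $k=1$, where $x$ is itself Nyldon and so is trivially its own longest Nyldon suffix. For $k \ge 2$, I would let $s$ be the longest Nyldon suffix of $x$; since $n_k$ is already a Nyldon suffix we have $|s| \ge |n_k|$, so it suffices to rule out $|s| > |n_k|$. The key structural observation is that any Nyldon suffix strictly longer than $n_k$ must contain all of $n_k$ and spill into $x' := n_1 \cdots n_{k-1}$, so I would write $s = s' n_k$ with $s'$ a nonempty suffix of $x'$. Choosing a Nyldon factorization $(m_1, \dots, m_r)$ of $s'$, the goal becomes to show that $(m_1, \dots, m_r, n_k)$ is a nondecreasing Nyldon factorization of $s$ into $r+1 \ge 2$ factors, which would contradict $s$ being Nyldon and finish the induction.

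The crux, and the step I expect to be the main obstacle, is verifying the single inequality $m_r \le_{\text{lex}} n_k$ needed to chain the two blocks together. Here I would apply the induction hypothesis twice, noting that both $s'$ and $x'$ are strictly shorter than $x$: it gives that $m_r$ is the longest Nyldon suffix of $s'$ and that $n_{k-1}$ is the longest Nyldon suffix of $x'$. Because $s'$ is a suffix of $x'$, the Nyldon word $m_r$ is a Nyldon suffix of $x'$, forcing $|m_r| \le |n_{k-1}|$, and I would then split into two cases. If $|m_r| = |n_{k-1}|$, the two suffixes of $x'$ coincide, so $m_r = n_{k-1} \le_{\text{lex}} n_k$ by the nondecreasingness of the original factorization; if $|m_r| < |n_{k-1}|$, then $m_r$ is a proper Nyldon suffix of the Nyldon word $n_{k-1}$, whence \Cref{lem:nyldonsuffix} yields $m_r <_{\text{lex}} n_{k-1} \le_{\text{lex}} n_k$. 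In either case $m_r \le_{\text{lex}} n_k$, delivering the desired contradiction. The delicate point throughout is keeping the induction hypothesis stated for arbitrary factorizations so that the comparison of $m_r$ with $n_{k-1}$ is legitimate without presupposing uniqueness.
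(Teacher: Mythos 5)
Your proof is correct, but it follows a genuinely different route from the paper's. You argue by strong induction on $|x|$ over arbitrary (not assumed unique) Nyldon factorizations: writing a hypothetical longer Nyldon suffix as $s = s'n_k$, you factor $s'$ as $(m_1,\dots,m_r)$ and reduce everything to the inequality $m_r \le_{\text{lex}} n_k$, which you obtain by comparing $m_r$ with $n_{k-1}$ (the longest Nyldon suffix of $n_1\cdots n_{k-1}$, by the induction hypothesis) and invoking \cref{lem:nyldonsuffix} in the strict-length case; both cases are sound because two suffixes of the same word are suffix-comparable. The paper instead obtains the statement as part of \cref{cor:suffix2}, a corollary of Melan\c{c}on's algorithm: by \cref{lem:melanconsuffix} every Nyldon suffix of $x$ appears as a block during the factorization version of the algorithm, and since contraction only enlarges blocks under the order, the last factor must be the longest Nyldon suffix. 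Your route is shorter and more elementary---it needs only the recursive definition, existence of factorizations, and \cref{lem:nyldonsuffix}---and it would generalize verbatim to any Nyldon-like set in which the analogue of \cref{lem:nyldonsuffix} is available. The paper's route is heavier but buys more: it proves \cref{lem:nyldonsuffix} and \cref{lem:longestsuffix} simultaneously (they are the two halves of \cref{cor:suffix2}), in the generality of Nyldon-like sets, and the same block/contraction machinery is then reused for the conjugacy theorem (\cref{thm:primitive2}), the circular-code result, and the bounds on factorizing powers. One dependency to flag: within this paper, \cref{lem:nyldonsuffix} is itself proved only via that same corollary, so your argument avoids circularity only because \cref{lem:nyldonsuffix} is stated earlier and has an independent proof in \cite{nyldon}; as written, your proof is a valid derivation of \cref{lem:longestsuffix} from \cref{lem:nyldonsuffix}, rather than a replacement for the paper's joint proof of both.
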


Clearly unique factorization follows from \cref{lem:longestsuffix}. In the following section, we give an alternate, elementary proof of the above two lemmas using only the stated definition of Nyldon words. We also provide an elementary proof of the following theorem about Nyldon words:

\begin{theorem}\label{thm:nyldonprimitive} \cite{nyldon}
Every primitive word has exactly one Nyldon word in its conjugacy class, and no periodic word is Nyldon.
\end{theorem}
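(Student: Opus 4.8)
The plan is to reduce everything to a counting identity coming from unique factorization, and then to supply two self-contained combinatorial facts. First I would exploit that, by \cref{lem:longestsuffix}, every word of length $n$ has a \emph{unique} nondecreasing factorization into Nyldon words; since a multiset of words admits exactly one nondecreasing arrangement, this gives a bijection between words of length $n$ and finite multisets of Nyldon words of total length $n$. Writing $q=|A|$ and letting $N_k$ be the number of Nyldon words of length $k$, the multiset bookkeeping yields the generating-function identity $\frac{1}{1-qt}=\prod_{k\ge 1}(1-t^k)^{-N_k}$. The very same argument applied to Lyndon words (whose multisets have a unique nonincreasing arrangement) produces the identical identity, so $N_k=L_k$ for every $k$, where $L_k=\frac1k\sum_{d\mid k}\mu(d)q^{k/d}$ is the number of Lyndon words of length $k$. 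By the classical necklace count, $L_k$ equals the number $P_k$ of primitive conjugacy classes of length $k$; thus $N_k=P_k$.

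With $N_n=P_n$ in hand, the theorem follows from two structural claims that I would prove directly from \cref{def:nyldon}, \cref{lem:nyldonsuffix}, and \cref{lem:longestsuffix}. Claim A: no periodic word is Nyldon. Writing a periodic word as $w=u^k$ with $u$ primitive and $k\ge 2$, the easy case is when $u$ is itself Nyldon: then $(u,u,\dots,u)$ is a nondecreasing factorization into $k\ge2$ Nyldon words, so $w$ is not Nyldon. Claim B: a conjugacy class contains at most one Nyldon word. Assuming $x=uv$ and $y=vu$ are distinct conjugate Nyldon words, I would examine the longest Nyldon suffix of an auxiliary word such as $uvu$ (which has $y$ as a suffix and $x$ as a prefix) and play the inequality of \cref{lem:nyldonsuffix} against the prefix/suffix comparisons to force $x=y$.

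Finally I would assemble the pieces. Claim A says all $N_n$ Nyldon words of length $n$ lie in primitive conjugacy classes, and Claim B says distinct Nyldon words lie in distinct classes; hence the Nyldon words of length $n$ inject into the set of primitive classes. Since both sets have size $N_n=P_n$, the injection is a bijection, so every primitive class contains exactly one Nyldon word---this is the first assertion---while Claim A is the second. The main obstacle is Claim A in the remaining case, where $u$ is primitive but \emph{not} Nyldon (e.g.\ $u=01$, $w=0101=(0)(101)$): here the nontrivial factorization crosses the period boundary, so I expect to argue by induction on $|w|$, using \cref{lem:longestsuffix} to show that the longest Nyldon suffix of $u^k$ is a \emph{proper} suffix and that the complementary prefix is lexicographically no larger, thereby producing a nondecreasing factorization into at least two parts.
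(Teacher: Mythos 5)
Your counting reduction is correct and genuinely attractive: unique nondecreasing factorization (\cref{lem:longestsuffix}) does give the identity $\frac{1}{1-qt}=\prod_{k\ge 1}(1-t^k)^{-N_k}$, the Chen--Fox--Lyndon theorem gives the same identity for Lyndon words, and hence $N_k=L_k=P_k$. This cleverly makes the \emph{existence} of a Nyldon word in each primitive class come for free once one knows primitivity (your Claim A) and uniqueness within a class (your Claim B) --- whereas the paper gets existence constructively, by running Melan\c{c}on's contraction algorithm on the circular word and observing that it terminates in a single $G$-word block.

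The problem is that Claims A and B are precisely the hard content of the theorem, and your proposal leaves both as unsubstantiated sketches. For Claim B, the plan ``examine the longest Nyldon suffix of $uvu$ and play the inequality of \cref{lem:nyldonsuffix} against prefix/suffix comparisons'' names no mechanism for reaching a contradiction; uniqueness of the Nyldon conjugate was an open conjecture of Grinberg, and both the original proof of Charlier et al.\ and the present paper's proof require substantially more machinery than the two suffix lemmas (here, \cref{lem:firstblock}, \cref{lem:collapsing}, \cref{lem:smallestblock}, and \cref{lem:contraction}, which show that contractions preserve block structure, so that a conjugate $w\neq n$ would need its first block to contract leftward, contradicting \cref{lem:smallestblock}). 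For Claim A in the hard case ($u$ primitive but not Nyldon), your plan is close to circular: since a Nyldon word is its own longest Nyldon suffix, ``the longest Nyldon suffix of $u^k$ is proper'' is just a restatement of ``$u^k$ is not Nyldon,'' and you give no inductive step that actually produces the required nondecreasing factorization; note also that it is not enough for the complementary prefix to be lexicographically no larger --- the prefix must itself admit a Nyldon factorization whose factors interleave correctly with the suffix. The paper handles this case by viewing $u^k$ as $k$ copies of the Nyldon conjugate $n$ of $u$ (``$n$-groups'') and showing contractions never cross group boundaries, which again rests on the block-preservation lemmas. Until Claims A and B are proved, the proposal establishes only the counting identity, not the theorem.
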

\begin{table}[h!]
\centering
\begin{tabular}{ |p{3cm}|p{3cm}|p{3cm}|p{3cm}|  }
 \hline
 \multicolumn{4}{|c|}{Nyldon Words} \\
 \hline
 0   &  10011   & 101111 &   1001111\\
 1 &   10110  & 1000000   &1011000\\
 10 &10111 & 1000001 &  1011001\\
 100    &100000 & 1000010 &  1011010\\
 101&   100001  & 1000011 &1011100\\
 1000& 100010  & 1000100   & 1011101\\
 1001& 100011  & 1000110& 1011110\\
 1011 & 100110 & 1000111 & 1011111 \\
 10000 & 100111 & 1001010 & \\
 10001 & 101100 & 1001100 & \\
 10010 & 101110 & 1001110 & \\
\hline
\end{tabular}
 \caption{List of Binary Nyldon Words of Length at Most 7}
 \label{tab:nyldontable}
 \end{table}
 
Our method will generalize to the following class of sets that includes Nyldon words.
\begin{definition}\label{def:nyldonlike}
Suppose we recursively generate a set of words $G$ and total order $\prec$ on $G$ as follows:
\begin{itemize}
    \item Each letter $a \in A$ is in $G$, with $\prec$ equal to the order $<$ on $A$.
    \item For $i=2, 3, \dots$, a word $w$ of length $i$ is in $G$ if $w$ cannot be factorized as $(w_1, w_2, \dots, w_k)$ where $w_1, \dots, w_k$ are in $G$ and $w_1 \preceq w_2 \preceq \cdots \preceq w_k$. Such a factorization is referred to as a $G$-factorization of $w$, with factors called $G$-factors. We refer to words $w$ in $G$ as $G$-words.
    \item Whenever we add a word to $G$, we keep the condition that for words $f, g \in G$, if $fg \in G$ then $f \prec fg$.
\end{itemize}
Then $(G, \prec)$ is a \textit{Nyldon-like set}. We refer to the latter condition as the \textit{Nyldon-like condition} for convenience.
\end{definition}
Note that the Nyldon words with the order $<_\text{lex}$ is a Nyldon-like set.
\begin{remark}\label{rmk:Gfactor}
If $w$ is in $G$, then its $G$-factorization is just $(w)$. Also each word must have at least one $G$-factorization. Given this fact, whether such a factorization is unique is equivalent to $G$ consisting of one word from each conjugacy class of primitive words by Schutzenberger's Theorem \cite{schutzen}; we prove that these facts hold for $G$ in Section 3. Nyldon-like sets will turn out to be right Hall sets (discussed at the end of Section 3) and therefore Lazard sets (discussed in Section 6), but the generation procedure used here is different than the Lazard procedure.
\end{remark}
\begin{example}
We generate an example Nyldon-like set $G$ on the binary alphabet.
\begin{enumerate}
    \item We start with $0 \prec 1$, where $A=\{0, 1\} \subset G$.
    \item The length-$2$ words $00$, $01$, $11$ can be $G$-factorized as $(0, 0), (0, 1), (1, 1)$, respectively. The word $10$ must be in $G$ with $10 \succ 1 \succ 0$ by the Nyldon-like condition. So, our current ordering of $G$ is $$0 \prec 1 \prec 10.$$
    \item The length-$3$ words $000, 001, 010, 011, 110, 111$ can be $G$-factorized as $(0, 0, 0), (0, 0, 1),$ $ (0, 10),$ $ (0, 1, 1), (1, 10), (1, 1, 1)$, respectively. The words $100$ and $101$ must be in $G$, and since $100=10 \cdot 0, 101=10 \cdot 1$, we have $100 \succ 10, 101 \succ 10$ by the Nyldon-like condition. We can choose how to order $100$ and $101$; let us pick $101 \prec 100$. So, our current ordering of $G$ is $$0 \prec 1 \prec 10 \prec 101 \prec 100.$$
    \item One can check that the length-$4$ words that cannot be $G$-factorized are $1000$, $1001$, $1011$. Let us insert these words into the ordering $\prec$ as $$0 \prec 1 \prec 10 \prec 101 \prec 1011 \prec 100 \prec 1000 \prec 1001,$$ respecting the Nyldon-like condition.
    \item One can check that the length-$5$ words that cannot be $G$-factorized are $10000, 10001, $ $10010, 10011, 10110, 10111$. We insert these words into the ordering $\prec$ in a way that respects the Nyldon-like condition, for example
    $$0 \prec 1 \prec 10 \prec 101 \prec 1011 \prec 10111 \prec 10110 \prec 100 $$ $$ \prec 1000\prec 10001 \prec 1001 \prec 10011 \prec 10000 \prec 10010.$$
    \item In contrast to Nyldon words, the length-$6$ word $101100$ can be $G$-factorized as $(101, 100)$, while $100101$ is in $G$. We can continue as above.
    \item We continue generating words in $G$ for all higher lengths, picking arbitrary choices to insert into the $\prec$ hierarchy that respect the Nyldon-like condition.
\end{enumerate}
\end{example}
We will prove \cref{lem:nyldonsuffix}, \cref{lem:longestsuffix}, and \cref{thm:nyldonprimitive} for all Nyldon-like sets in the following section.

\section{Melan\c{c}on's Algorithm}
A Nyldon-like set $G$ provides a unique $G$-factorization of any word $w$, among other properties. We will prove this fact by deriving an algorithm to find the $G$-factorization of a word. It will turn out that this algorithm coincides with an algorithm of Melan\c{c}on that works on Hall sets; we will elaborate at the end of this section.


\begin{definition}\label{def:preserve}
Suppose we can write $w$ as the concatenation of blocks $u_1, u_2, \dots, u_k$, and we can also factor it into $(n_1, n_2, \dots, n_m)$, i.e., $w=u_1u_2\cdots u_k=n_1n_2\cdots n_m$. We say that the factorization $(n_1, n_2, \dots, n_m)$ preserves the blocks $u_1, u_2, \dots, u_k$ if each factor $n_i$ can be written as the concatenation of a sequence of blocks in ($u_1, u_2, \dots, u_k$), i.e., no factor $n_i$ starts or ends in the middle of a block $u_j$. We can think of the factorization $(u_1, u_2, \dots, u_k)$ as a refinement of $(n_1, n_2, \dots, n_m)$.
\end{definition}

For the rest of this section, we fix a Nyldon-like set $(G, \prec)$.

\begin{lemma}\label{lem:firstblock}
Suppose $u_1, u_2, \dots, u_m$ are $G$-words such that for any substring of blocks $w=u_iu_{i+1}\cdots u_j$ with $1 \le i \le j \le m$,
\begin{enumerate}
    \item Any $G$-factorization of $w$ preserves the blocks $u_i, u_{i+1}, \dots, u_j$.
    \item If $w$ is in $G$ and $i>1$, then $w \succeq u_i$ (of course, equality holds only when $i=j$).
\end{enumerate}
Then no substring of blocks $u_au_{a+1}\cdots u_b$ with $1 \le a < b \le m$ can be in $G$ if $u_a \preceq u_{a+1}$.
\end{lemma}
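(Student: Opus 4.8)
The plan is to prove the statement directly by exhibiting an explicit $G$-factorization of $w := u_a u_{a+1} \cdots u_b$ into at least two non-decreasing $G$-words; by \cref{def:nyldonlike} the existence of such a factorization immediately forces $w \notin G$. (The case $a=b$ is degenerate, since a single block is itself a $G$-word; the content of the statement is for $b \ge a+1$, which the hypothesis $u_a \preceq u_{a+1}$ tacitly supplies by referencing $u_{a+1}$.) The argument is ``one shot'' and needs no induction on the number of blocks: the idea is to peel off the first block $u_a$ and reuse a $G$-factorization of the remaining tail, splicing $u_a$ onto the front.

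First I would set $w' := u_{a+1} u_{a+2} \cdots u_b$ and choose any $G$-factorization of $w'$, say $(g_1, g_2, \dots, g_r)$ with $g_1 \preceq g_2 \preceq \cdots \preceq g_r$; such a factorization exists because every word admits one (it is just $(w')$ when $w' \in G$, and otherwise exists by the definition of $G$). Since $w'$ is the substring of blocks from index $a+1$ to $b$, hypothesis (1) guarantees that this factorization preserves the blocks, so in particular the first factor has the form $g_1 = u_{a+1} u_{a+2} \cdots u_c$ for some $a+1 \le c \le b$; crucially, it begins exactly at the block $u_{a+1}$ and is block-aligned.

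Next I would bound $g_1$ from below. Because $g_1 \in G$ and its starting index $a+1$ satisfies $a+1 > 1$, hypothesis (2) yields $g_1 \succeq u_{a+1}$. Combining this with the standing hypothesis $u_a \preceq u_{a+1}$ gives $u_a \preceq u_{a+1} \preceq g_1$. Now consider the factorization $(u_a, g_1, g_2, \dots, g_r)$ of $w = u_a w'$: each factor lies in $G$ (the factors $g_t$ by construction, and $u_a$ as a block), the chain $u_a \preceq g_1 \preceq g_2 \preceq \cdots \preceq g_r$ is non-decreasing, and it has $r+1 \ge 2$ factors. This is precisely a $G$-factorization witnessing $w \notin G$.

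The only points requiring care — and the ``main obstacle'' in an otherwise short argument — are bookkeeping with the recursive order built into \cref{def:nyldonlike}. I must check that all exhibited factors $u_a, g_1, \dots, g_r$ are strictly shorter than $w$ (automatic, since there are at least two of them), so that their membership in $G$ is already decided when $w$ is processed and the factorization legitimately certifies $w \notin G$. The other subtlety is merely to confirm that the chain stays non-decreasing at the new seam between $u_a$ and $g_1$, which is exactly where hypothesis (2) (requiring block-alignment from hypothesis (1)) and the assumption $u_a \preceq u_{a+1}$ are both consumed; every remaining inequality is inherited verbatim from the chosen factorization of $w'$.
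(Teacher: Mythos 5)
Your proof is correct and follows essentially the same route as the paper's: both take a $G$-factorization of the tail $u_{a+1}\cdots u_b$, use block preservation to see that its first factor is block-aligned starting at $u_{a+1}$, apply hypothesis (2) to get that this factor is $\succeq u_{a+1} \succeq u_a$, and prepend $u_a$ to obtain a nondecreasing $G$-factorization with at least two factors, forcing $u_a\cdots u_b \notin G$. The only difference is presentational — your extra bookkeeping about factor lengths and the recursive construction of $G$ is left implicit in the paper.
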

\begin{proof}
Suppose $u_a \preceq u_{a+1}$. The word $u_{a+1}\cdots u_b$ has a $G$-factorization by \cref{rmk:Gfactor}, and by the assumption that the $G$-factorization preserves the blocks $u_{a+1}, \dots, u_b$, this factorization starts with some prefix $u_{a+1}\cdots u_{a+k}$. Then $u_{a+1}\cdots u_{a+k}$ is a $G$-word so $u_{a+1}\cdots u_{a+k} \succeq u_{a+1} \succeq u_a$, where the first inequality is due to the second condition of the lemma. Thus we can prepend the $G$-factor $u_a$ to this $G$-factorization of $u_{a+1}\cdots u_b$ to get a valid $G$-factorization of $u_a\cdots u_b$ with multiple $G$-factors, and so $u_a\cdots u_b$ is not in $G$. For equality to hold in the second condition, we need the words $w$ and $u_i$ to themselves be equal, or $i=j$.
\end{proof}

\begin{lemma}\label{lem:collapsing}
For $m \ge 2
$, suppose $u_1, u_2, \dots, u_m$ are $G$-words such that
\begin{enumerate}
    \item The word $w=u_1u_2\cdots u_m$ is in $G$.
    \item Any $G$-factorization of any substring of blocks $u_iu_{i+1}\cdots u_j$ with $1 \le i \le j \le m$ preserves the blocks $u_i, u_{i+1}, \dots, u_j$.
\end{enumerate}
Then $w \succ u_1$.
\end{lemma}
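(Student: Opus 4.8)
The plan is to argue by strong induction on the length $|w|$, using \cref{lem:firstblock} as the main engine and the Nyldon-like condition of \cref{def:nyldonlike} to convert structural facts into statements about $\prec$. My first task is to verify that the blocks $u_1, \dots, u_m$ satisfy the hypotheses of \cref{lem:firstblock}. Hypothesis (1) there is precisely hypothesis (2) here. For hypothesis (2) of \cref{lem:firstblock}, I must show that whenever a whole-block substring $u_i \cdots u_j$ with $i > 1$ lies in $G$ it satisfies $u_i \cdots u_j \succeq u_i$: when $i = j$ this is an equality, and when $i < j$ it is exactly the conclusion of the present lemma applied to the strictly shorter word $u_i \cdots u_j$, whose blocks inherit conditions (1) and (2), so it follows from the inductive hypothesis. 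Thus \cref{lem:firstblock} applies, and since $w = u_1 \cdots u_m \in G$ with $m \ge 2$, taking the full substring forces $u_1 \succ u_2$, and more generally the first block strictly dominates the second in any whole-block $G$-substring.

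It remains to upgrade these comparisons between blocks into the comparison $w \succ u_1$, and for this I would split on the remainder $v = u_2 \cdots u_m$. If $v \in G$, then $w = u_1 \cdot v$ is a product of two $G$-words, so the Nyldon-like condition gives $u_1 \prec w$ immediately. If $v \notin G$, fix a $G$-factorization $(n_1, \dots, n_p)$ of $v$ with $p \ge 2$, which preserves the blocks by condition (2); prepending the $G$-word $u_1$ to it would give a factorization of $w$, and since $w \in G$ this cannot be nondecreasing, forcing $u_1 \succ n_1$. I then intend to produce a chain $u_1 = z_0 \prec z_1 \prec \cdots \prec z_p = w$ with $z_j = u_1 n_1 \cdots n_j$: granting that each $z_j$ lies in $G$, every step $z_{j-1} \prec z_j$ is an instance of the Nyldon-like condition (since $z_j = z_{j-1} \cdot n_j$ with $z_{j-1}, n_j, z_j \in G$), the final step using only $w \in G$, and transitivity yields $u_1 \prec w$. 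Equivalently this is an induction on $p$, with $u_1 n_1$ absorbed into a growing $G$-prefix that strictly dominates the next factor.

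The main obstacle is exactly the membership claim $z_j \in G$ (it suffices to treat the representative case $u_1 n_1 \in G$, the others being identical with $u_1$ replaced by the current prefix). I would argue by contradiction: a $G$-factorization of $u_1 n_1$ preserves the blocks by condition (2), so its first factor is a whole-block prefix $u_1 \cdots u_s$. The case $s = 1$ is clean, for then the remaining factors form a $G$-factorization of $n_1$, which must be $(n_1)$ since $n_1 \in G$, so the whole factorization is $(u_1, n_1)$ and nondecreasingness gives $u_1 \preceq n_1$, contradicting $u_1 \succ n_1$. The delicate case is $s \ge 2$, where the first factor properly absorbs part of $n_1$; excluding it is where the block-preservation hypothesis and \cref{lem:firstblock} must do the real work. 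Here I expect to need a companion statement—that a proper $G$-suffix of a $G$-word $x$ is $\prec x$—proved at the shorter length $|u_1 \cdots u_s| < |w|$ inside the same induction, which would let me splice a factorization of the absorbed prefix onto $(n_2, \dots)$ to contradict $n_1 \in G$. Arranging this simultaneous induction so that the suffix comparison is available without circularity is the crux of the whole argument.
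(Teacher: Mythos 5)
The first half of your argument is sound and is essentially the paper's own opening move: you verify hypothesis (2) of \cref{lem:firstblock} by applying the statement being proved to strictly shorter whole-block substrings, conclude $u_1 \succ u_2$, and (in the case $v = u_2\cdots u_m \notin G$) correctly deduce $u_1 \succ n_1$. The problem is the central membership claim that each $z_j = u_1 n_1 \cdots n_j$ lies in $G$, which you leave unproved exactly where it is hard. The case you call delicate ($s \ge 2$) cannot be dismissed: the naive principle it would otherwise follow from --- that $f, g \in G$ with $f \succ g$ forces $fg \in G$ --- is false. For Nyldon words take $f = 100010$ and $g = 1$: both are Nyldon and $f >_{\text{lex}} g$, yet $fg = 1000101$ has the nondecreasing Nyldon factorization $(1000, 101)$ and so is not Nyldon. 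Hence $z_1 \in G$ genuinely requires the block-preservation structure together with the suffix comparison you invoke (a proper $G$-word suffix of a $G$-word is $\prec$ that word). That comparison is \cref{cor:suffix2}, and in the paper it is proved \emph{after} \cref{lem:collapsing}, via \cref{lem:smallestblock}, \cref{lem:contraction}, and \cref{lem:melanconsuffix}, all of which rest on \cref{lem:collapsing}. To use it ``at shorter lengths inside the same induction'' you would have to fold that entire machinery into a simultaneous induction on word length (prove \cref{lem:collapsing} at length $n$ from the suffix property at lengths $< n$, then rebuild \cref{lem:smallestblock}, \cref{lem:contraction}, \cref{lem:melanconsuffix}, and the suffix property at length $n$, and so on). Such a scheme can likely be made consistent, but it is a substantial reorganization that you have only gestured at, and it is precisely the step you defer as ``the crux of the whole argument.'' As written, the proof is incomplete at its key step.

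It is worth seeing how the paper sidesteps this entirely. It inducts on the number of blocks $m$ rather than on length, and it contracts at a different place: taking $k$ maximal with $u_1 \succ u_2 \succ \cdots \succ u_k$, it merges $u_{k-1}u_k$. Maximality gives $u_k \preceq u_{k+1}$, so \cref{lem:firstblock} forces any $G$-factor beginning with $u_k$ to be exactly $u_k$, while the inductive hypothesis makes any factor ending at $u_{k-1}$ strictly larger than $u_k$; this shows the merged block is never split, and the induction closes with $m-1$ blocks. No suffix comparison is ever needed. If you want to salvage your approach, the shortest repair is to abandon the contraction at the front (absorbing $n_1$ into $u_1$) in favor of the paper's contraction at the end of the maximal decreasing chain.
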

\begin{proof}
We will induct on $m$. For the base case $m=2$, by assumption $u_1, u_2, $ and $u_1u_2$ are in $G$, so by the Nyldon-like condition we have $w=u_1u_2 \succ u_1$, proving the lemma. Assume $m>2$ and that the lemma is true for all smaller $m$ at least $2$.

Let $k$ be the largest integer from $1$ to $m$ such that $u_1 \succ u_2 \succ \cdots \succ u_k$.

Let $u_2u_3\cdots u_m$ have a $G$-factorization that starts with $u_2u_3\cdots u_\ell$ for some $\ell$ sSuch an $\ell$ exists because all $G$-factorizations of $u_2u_3\cdots u_m$ preserve the blocks $u_2, u_3, \dots, u_m$). Then by the inductive hypothesis, we have $u_2u_3\cdots u_\ell \succeq u_2$. If we were to have $u_2 \succeq u_1$, then prepending $u_1$ to this $G$-factorization of $u_2u_3\cdots u_m$ would be a valid $G$-factorization of $w=u_1u_2\cdots u_m$, contradicting the fact that $w$ is in $G$. Then $u_1 \succ u_2$, so if $k$ be the largest integer from $1$ to $m$ such that $u_1 \succ u_2 \succ \cdots \succ u_k$, we have $k>1$. We claim that combining $u_{k-1}$ and $u_k$ into a block doesn't change the fact that any $G$-factorization of a substring of blocks in $(u_1, u_2, \dots, u_{k-2}, u_{k-1}u_k, u_{k+1}, \dots, u_m)$ preserves those blocks.

First, note that any $G$-factorization of $u_{k-1}u_k$ must preserve the blocks $u_{k-1}, u_k$ by assumption, so since $u_{k-1} \succ u_k$ the word $u_{k-1}u_k$ must be in $G$. Now if some $G$-factorization of a substring of blocks of $(u_1, u_2, \dots, u_{k-2}, u_{k-1}u_k, u_{k+1}, \dots, u_m)$ doesn't preserve the block $u_{k-1}u_k$, then we must have consecutive $G$-words $p, q$ in that $G$-factorization that are a witness to the lack of block preservation: we can write $p=u_{k-i}u_{k-i+1}\cdots u_{k-1}, q=u_ku_{k+1}\cdots u_{k+j}$ for some $i \ge 1, j \ge 0$ such that $p \preceq q$. We can apply \cref{lem:firstblock} to the $G$-words $u_k, u_{k+1}, \dots, u_{k+j}$ since the first condition is satisfied by assumption and the second condition is satisfied by the inductive hypothesis. This lemma tells us that if $j>0$, then $q$ cannot be in $G$ since $u_a \preceq u_{a+1}$; thus $q=u_k$. But then $p \succeq u_{k-i} \succ u_k=q$ (where the first inequality follows from the inductive hypothesis, and the second from $u_1 \succ u_2 \succ \cdots \succ u_k$), which contradicts the fact that $p$ and $q$ are consecutive $G$-factors.

So we can combine $u_{k-1}$ and $u_k$ into a block without changing the conditions of the lemma. But now we have $m-1$ blocks, so these blocks satisfy the inductive hypothesis and $w$ is greater than the first block under $\prec$. If $k=2$ then the first block is $u_1u_2$ and $w \succ u_1u_2 \succ u_1$, and if $k>2$ then the first block is $u_1$ and $w \succ u_1$, proving the lemma.
\end{proof}

\begin{definition}
Suppose $u_1, u_2, \dots, u_m$ are $G$-words such that for any sequence of numbers $a_1, a_2,$ $ \dots, a_i$ in $\{1, 2, \dots, m\}$, any $G$-factorization of $u_{a_1}u_{a_2}\cdots u_{a_i}$ preserves the blocks $u_{a_1}, \dots, u_{a_i}$. Then we say that the (multi)set $\{u_1, u_2, \dots, u_m\}$ satisfies the preservation condition. Note that whether a multiset satisfies the preservation condition only depends on whether its set of distinct elements does.
\end{definition}

Note that if a set satisfies the preservation condition, then any subset does as well.

\begin{corollary}\label{lem:smallestblock}
Suppose a multiset of $G$-words $\{u_1, u_2, \dots, u_m\}$ satisfies the preservation condition. Then, if $u_k$ is the smallest of the $G$-words $u_1, u_2, \dots, u_m$ under $\prec$, the word $u_ku_{a_1}u_{a_2}\cdots u_{a_i}$ is not in $G$ for any sequence $a_1, a_2, \dots, a_i$. Furthermore, any $G$-factorization of $u_ku_{a_1}u_{a_2}\cdots u_{a_i}$ begins with $u_k$.
\end{corollary}
\begin{proof}
The first condition of \cref{lem:firstblock} holds for the sequence of $G$-words $u_k, u_{a_1}, u_{a_2}, \dots, u_{a_i}$, and then by \cref{lem:collapsing} the second condition holds as well. Therefore since $u_k \preceq u_{a_1}$, by \cref{lem:firstblock} the word $u_ku_{a_1}u_{a_2}\cdots u_{a_i}$ cannot be in $G$, nor can any prefix of it that preserves the blocks $u_k, u_{a_1}, \dots, u_{a_i}$, except the prefix $u_k$ itself. Then any $G$-factorization of $u_ku_{a_1}u_{a_2}\cdots u_{a_i}$ must begin with $u_k$.
\end{proof}

\begin{example}
The set $A$ of single letters clearly satisfies the preservation condition, as does any multiset of single letters. \cref{lem:smallestblock} tells us that, for example, no word in $G$ starts with $0$ except $0$ itself. Note that in Table 1, every Nyldon word besides $0$ starts with $1$. We will be able to find more examples of sets satisfying the preservation condition by using the following lemma.
\end{example}

\begin{lemma}\label{lem:contraction}
Suppose a multiset of $G$-words $\{u_1, \dots, u_m\}$ satisfies the preservation condition with $u_k$ minimal among the $G$-words under $\prec$, with the additional condition that $u_{k-1} \succ u_k$ (rather than just $u_{k-1} \succeq u_k$). Then, if we combine $u_{k-1}$ and $u_k$ into a block, the resulting multiset of $m-1$ $G$-words still satisfies the preservation condition. 
\end{lemma}

\begin{proof}
First, by the hypothesis, any $G$-factorization of $u_{k-1}u_k$ must preserve the blocks $u_{k-1}, u_k$. Since $u_{k-1}u_k$ cannot have a $G$-factorization $(u_{k-1}, u_k)$, it must be in $G$.

We want to show that any $G$-factorization of $w=u_{a_1}u_{a_2}\cdots u_{a_i}u_{k-1}u_ku_{a_{i+1}}u_{a_{i+2}}\cdots u_{a_j}$ does not break up $u_{k-1}u_k$, for arbitrary $i, j \ge 0$ and $a_1, a_2, \dots, a_{i+j} \in \{1,\dots, m\}$. For sake of contradiction, suppose one does. Then, there are two $G$-factors, one ending with $u_{k-1}$, and one beginning with $u_k$ (since the blocks $u_{k-1}, u_k$ are preserved). Let these be $p$ and $q$, respectively. By \cref{lem:smallestblock}, $q=u_k$. Then either $p=u_{k-1}$, in which case $p \succ q$, or $p=u_{a_{i-b}}u_{a_{i-b+1}}\cdots u_{a_i}u_{k-1}$, in which case $p \succ u_{a_{i-b}} \succeq u_k$ by \cref{lem:collapsing}.

Either way, $p \succ q$, which is a contradiction since $p, q$ are consecutive $G$-factors.
\end{proof}

Let the operation of combining blocks as in \cref{lem:contraction} be called \textit{contraction}, where we always contract a block into the block to its left.
\begin{example}
Suppose $A=\{0, 1\}$ with $0 \prec 1$. The multiset $\{1, 1, 0, 0\}$ satisfies the preservation condition By \cref{lem:contraction}, the multiset $\{1, 10, 0\}$ also satisfies the preservation condition, and this multiset is obtained from the previous one from contracting the third block onto the second block. Any multiset consisting of copies of $0, 10, 1$ also satisfies the preservation condition by definition.
\end{example}

If we start with a word as a circular sequence of blocks which are initially letters, we can repeat this algorithm to end up with a $G$-word. It turns out that this algorithm is a special case of an algorithm of Melan\c{c}on; see \cite{melancon} for more information, and also the last section of \cite{reutenauer} or Chapter 4 of \cite{reutenauerhall} for an example of this algorithm. The algorithm is also presented at the end of \cite{nyldon}; we have rewritten the pseudocode from \cite{nyldon} below to work for a general Nyldon-like set.

Specifically, suppose we start with a primitive word, with each digit being a block. We consider the word as circular (with the blocks in a circle) rather than linear. At each step, we repeat the contraction operation, eventually terminating in one block, which is a $G$-word. As we will see shortly, this word is the unique $G$-word conjugate of the original word. The variable $T(i)$ designates the $i$th element of the list $T$ while $T(-i)$ denotes the $(n-i+1)$th element of $T$ if $n$ is the length of $T$.  \\

\begin{algorithmic}
\Require $w\in A^+$ primitive
\Ensure GC is the $G$-word conjugate of $w$
\State GC $\leftarrow$ list of letters of $w$,  $T\leftarrow$ list of letters of $w$
\While{length(GC) $>1$}
\If{$T(1)=\min_{\prec}$GC and $T(1)\prec$ $T(-1)$} 
\State $T\leftarrow (T(2),\ldots, T(-2), T(-1)\cdot T(1))$
 \EndIf
\State $i\leftarrow 2$, $j\leftarrow 2$
\While{$j\le \text{length}(T)$}
\While{$i\le \text{length}(T)$ and $T(i)\ne\min_{\prec}$GC}
\State $i\leftarrow i+1$
\EndWhile
\If{$i\le \text{length}(T)$ and $T(i)\prec$ $T(i-1)$}
\State $T\leftarrow (T(1),\ldots, T(i-1)\cdot T(i),\ldots,T(-1))$
\EndIf
\State $j\leftarrow i+1$, $i\leftarrow i+1$
\EndWhile
\State GC $\leftarrow T$
\EndWhile

\noindent\Return GC
\end{algorithmic}

\begin{center} Algorithm 1: Melan\c{c}on's algorithm \end{center}

We now provide an example of Melan\c{c}on's algorithm for the case of Nyldon words. 
\begin{example}\label{exa:nyldonconjugate}
Say we want to find the Nyldon conjugate of $10001011010101$. Recall that the ordering $\prec$ on Nyldon words is the lexicographic ordering.

\begin{enumerate}
    \item We start with $1, 0, 0, 0, 1, 0, 1, 1, 0, 1, 0, 1, 0, 1$ as a circular word (which we write here linearly for convenience).
    \item The smallest block lexicographically out of $0, 1$ is $0$. We contract the $0$s, giving $1000, 10, 1, 10,$ $ 10, 10, 1$.
    \item The smallest block lexicographically out of $1000, 10, 1$ is $1$. We contract the $1$s, giving $1000, 101, 10, 10, 101$.
    \item The smallest block lexicographically out of $1000, 101, 10$ is $10$. We contract the $10$s, giving $1000, 1011010, 101$.
    \item  The smallest block lexicographically out of $1000, 101, 1000$ is $1000$. We contract the $1000$. It is the first block in our linear representation of the word, but recall that we are treating this word (and therefore the sequence of blocks) as circular. So $1000$ is contracted onto $101$, giving the sequence of blocks $1011010, 1011000$.
    \item We contract the $1011000$ (the smaller block lexicographically), giving a Nyldon conjugate $10110101011000$.
\end{enumerate}

\end{example}

Note that in each step of \cref{exa:nyldonconjugate}, the resulting sequence of blocks forms a set satisfying the preservation condition. For example, $\{1000, 101, 10, 10, 101\}$ satisfies the preservation condition.

Using only the recursive definition of $G$-words and elementary methods, we prove the following:

\begin{theorem}\label{thm:primitive2}
\begin{enumerate}
    \item Every primitive word has exactly one $G$-word conjugate.
    \item No periodic word is in $G$.
\end{enumerate}
\end{theorem}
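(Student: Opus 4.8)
The plan is to read Theorem~\ref{thm:primitive2} off the contraction process directly, carrying a single invariant through the whole argument. I fix the input word and regard it as a circular sequence of blocks, initially its individual letters. The invariant I will maintain is exactly the hypothesis of \cref{lem:smallestblock}: every block is a $G$-word, and any $G$-factorization of any concatenation of copies of the current blocks (in any order) preserves those blocks. This holds at the start because single letters cannot be split, and \cref{lem:contraction} shows it survives each contraction of the minimal block into its left neighbour. Two consequences will be used throughout: the circular concatenation of the current blocks is always a conjugate of the input, and by \cref{lem:smallestblock} no $G$-word conjugate can begin with a copy of the current minimal block.

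For part (1), existence: if the input $w$ is primitive, then at no stage can all blocks be equal while two or more remain, since that would present $w$ as a conjugate of a proper power. Hence as long as at least two blocks remain, the minimal block has a copy whose left neighbour is strictly larger, and \cref{lem:contraction} lets us contract; the number of blocks strictly decreases, so the process terminates in a single block $v$. Since every contraction produces a $G$-word, $v \in G$, and $v$ is a conjugate of $w$, giving existence. For uniqueness I argue by induction on the stages of the process that every $G$-word conjugate $c$ begins at a block boundary. This is clear initially; for the inductive step, \cref{lem:smallestblock} tells us $c$ does not begin with the minimal block, so the only boundaries erased by contraction---those immediately preceding minimal blocks---do not include the start of $c$, and $c$ still begins at a boundary afterwards. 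At the terminal stage there is a single block $v$ and hence a single boundary, forcing $c = v$.

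For part (2), let $w = p^r$ with $p$ its primitive root and $r \ge 2$. The initial circular word and the contraction rule (contract every copy of the minimal block) are both invariant under rotation by $|p|$, so every stage inherits this $r$-fold symmetry; since $p$ is primitive this rotation has order $r$, so the orbit of each block boundary has size $r$, the block count is always a multiple of $r$, and the process can never reach a single block. It therefore halts at an all-equal configuration, and because the minimal period is $|p|$, each equal block has length at least $|p|$, so there are at most $r$ of them; with the count a multiple of $r$ this forces exactly $r$ equal blocks $v$ with $|v| = |p|$, i.e.\ the circular word is $v^r$. The boundary induction above still applies, so any $G$-word conjugate reads from a block boundary and hence equals $v^r$; but $v$ is the minimal block, so \cref{lem:smallestblock} gives $v^r \notin G$. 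Thus no conjugate of $w$ lies in $G$, and in particular $w \notin G$.

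I expect the main obstacle to be the careful handling of repeated minimal blocks: contracting a maximal run of equal minimal blocks must be justified one contraction at a time, each step still pitting the minimal block against a strictly larger left neighbour, with \cref{lem:collapsing} ensuring the growing left block stays larger; and for part (2) the symmetry together with the minimal-period bookkeeping that pins the halting configuration to exactly $v^r$ with $|v| = |p|$ relies on the primitivity of $p$. The conceptual heart, and the step I would write most carefully, is the boundary induction: it is precisely where \cref{lem:smallestblock} converts the statement ``a word beginning with the minimal block is not in $G$'' into ``the starting cut of any $G$-conjugate survives every contraction,'' which simultaneously yields uniqueness in~(1) and the contradiction in~(2).
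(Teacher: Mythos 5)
Your proof is correct. Part (1) is essentially the paper's own argument: the same invariant (the hypothesis of \cref{lem:smallestblock}, preserved through contractions by \cref{lem:contraction}), termination at a single block forced by primitivity, and uniqueness via the fact that no $G$-word can begin with the current minimal block; your ``boundary induction'' is a cleaner packaging of the paper's ``at some point the first block of $w$ contracts into the last block.'' Part (2) is where you genuinely diverge. The paper bootstraps from part (1): it writes the circular word as $k$ copies ($n$-groups) of the $G$-word conjugate $n$ of the primitive root, uses the boundary-survival fact for $n$ established in part (1) to show contractions never cross $n$-group boundaries, and concludes the process halts at $k$ blocks each equal to $n$. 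You never identify the halting block at all: you pick a rotation-equivariant contraction schedule, observe that every configuration inherits the $r$-fold symmetry of $p^r$, deduce the block count is always a positive multiple of $r$ (hence never $1$), pin the halting configuration to exactly $r$ equal blocks of length $|p|$ by the minimal-period bound, and then let the boundary induction rule out every conjugate. What each buys: your route makes part (2) independent of part (1) (no need for the $G$-conjugate of $p$), and it makes rigorous the step the paper leaves terse — its closing ``Therefore, our periodic word is not in $G$'' silently needs the case where the periodic word starts mid-block, which your boundary induction handles; the cost is importing standard facts on periods of circular words and justifying the symmetric schedule one contraction at a time, which you rightly flag (note you cannot appeal to the paper's remark on order-independence of contractions, since that remark is derived from this very theorem). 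The paper's route instead yields structural information — the $n$-group picture and the identification of the halting blocks as $n$ — which it reuses later in the proof of \cref{thm:infectiongeneral}. One simplification available to you at the end: once the halting blocks all equal $v \in G$ with $r \ge 2$ of them, $v^r \notin G$ follows directly from \cref{def:nyldonlike}, since $(v, \dots, v)$ is a nondecreasing factorization into $G$-words; \cref{lem:smallestblock} is not needed for that step.
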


\begin{proof}
Suppose we start with a word as a circular sequence where each letter is its own block. Clearly any $G$-factorization of a sequence of these blocks preserves the blocks, as they are just single letters. We now perform Melan\c{c}on's algorithm on the sequence of blocks. The key observation is that the initial multiset of single letter blocks satisfies the preservation condition, and after any contraction step, the resulting multiset of blocks will still satisfy the preservation condition by \cref{lem:contraction}. The algorithm terminates when all the blocks are identical, as otherwise there is a smallest block under the total order $\prec$ on $G$ that has a different block to its left.

If the word is primitive, then when the algorithm terminates, we are left with one block $n$, which by \cref{lem:contraction} is in $G$ (since $\{n\}$ satisfies the preservation condition, meaning its $G$-factorization must be $(n)$). Suppose we start with a conjugate $w$ of $n$. We perform the algorithm on this $w$. Initially, this $w$ preserves the blocks. However, at some point the first block in $w$, which we denote $u$, will contract into the last block, as we must be left with the single block $n$ at the end. By \cref{lem:smallestblock}, $u$ cannot start $w$ if $w$ is a $G$-word. Therefore, $w$ is not in $G$ and every primitive word has exactly one $G$-word conjugate.

If we start with a periodic word with minimal unit $s$, suppose that the $G$-word conjugate of $s$ is $n$. We can think of our circular periodic word as a sequence of $k$ words $n$. Call each such word an $n$-group. If we perform Melan\c{c}on's algorithm on our word, then we know from above that until an $n$-group forms, its first block will not contract to the left. So all contractions between two blocks that happen will have the property that the two blocks are in the same $n$-group, until we end with $k$ blocks that are just $n$. Therefore, our periodic word is not in $G$.
\end{proof}

As we end up with the same result each time (in the case of a primitive word, its unique $G$-word conjugate), it doesn't matter in what order we do contractions of blocks of the same type, as long as we don't contract a block into an equal block to its left.

With a minor modification, we can use Melan\c{c}on's algorithm to find the $G$-factorization of a word too, which we illustrate in \cref{exa:factorization}.

\begin{definition}
We define the factorization version of Melan\c{c}on's algorithm on a word $w$ as follows: we start with each letter of $w$ as its own block, with the blocks in a line instead of a circle, and perform Melan\c{c}on's algorithm. Before each contraction step, we check if the current first block is the smallest under $\prec$. If so, we remove it, adding it to the $G$-factorization of $w$, and continue performing Melan\c{c}on's algorithm on the remaining blocks. This process continues until the whole word has been factored.
\end{definition}
We can use the factorization version of Melan\c{c}on's algorithm to reprove the fact from \cite{nyldon} that the $G$-factorization is unique.

\begin{theorem}\label{thm:melanconfactorization}
The factorization version of Melan\c{c}on's algorithm gives the unique $G$-factorization of a word $w$.
\end{theorem}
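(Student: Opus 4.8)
The plan is to prove a slightly stronger statement by strong induction on $|w|$: for any word $w$ together with any decomposition into blocks $b_1, \dots, b_r$ that are $G$-words and satisfy the preservation hypothesis of \cref{lem:smallestblock} (every $G$-factorization of every subsequence of the $b_i$ preserves the blocks), the modified algorithm, run starting from these blocks, outputs the unique $G$-factorization of $w$. Since the letters of any word trivially form such a decomposition, this yields the theorem, and it also lets the recursion after each peel reuse the current blocks rather than restarting from letters. Throughout I will use two facts established earlier: by the first part of \cref{lem:contraction}, every block produced by a contraction lies in $G$, so inductively every block occurring during the run is a $G$-word; and by \cref{lem:contraction} the preservation hypothesis is maintained after each contraction, while by the order-independence remark following \cref{thm:primitive2} the output does not depend on the order in which legal contractions are performed.

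For the inductive step I first check that the run necessarily reaches a configuration in which the first block is the $\prec$-smallest among all current blocks. Indeed, if $v$ denotes the current minimum block value and the first block is not equal to $v$, then the leftmost block of value $v$ is preceded by a strictly larger block, so a contraction is legal; hence the run can only stall when the first block equals $v$. Choosing (by order-independence) to contract until stalling, the first block $u = b_1$ at that moment is the $\prec$-smallest of $b_1, \dots, b_r$ and is a $G$-word. By \cref{lem:smallestblock} applied with $u_k = b_1$, every $G$-factorization of $w = b_1 b_2 \cdots b_r$ must begin with $u$, which is exactly the factor $f_1 = u$ the algorithm peels off.

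It remains to analyze the remainder $w'' = b_2 \cdots b_r$. The blocks $b_2, \dots, b_r$ still satisfy the preservation hypothesis and $|w''| < |w|$, so by the induction hypothesis the algorithm outputs the unique $G$-factorization $(f_2, \dots, f_k)$ of $w''$, with $f_2 \preceq \cdots \preceq f_k$. Since every $G$-factorization of $w$ begins with $u$ and continues with a $G$-factorization of $w''$, the $G$-factorization of $w$ is forced to equal $(u, f_2, \dots, f_k)$, giving uniqueness. To confirm this is a genuine $G$-factorization I must verify $f_1 = u \preceq f_2$. Writing $f_2$ as a concatenation of consecutive blocks $b_2 \cdots b_s$ (possible by preservation): if $s = 2$ then $f_2 = b_2 \succeq u$ since $u$ is the global minimum block, while if $s > 2$ then \cref{lem:collapsing} gives $f_2 \succ b_2 \succeq u$. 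In both cases $f_1 \preceq f_2$, so $(f_1, f_2, \dots, f_k)$ is nondecreasing with all factors in $G$, completing the induction.

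The main obstacle is precisely the nondecreasing comparison $f_1 \preceq f_2$ between the factor just peeled off and the first factor of the remainder; this is where \cref{lem:collapsing} is essential, since without it a long second factor $b_2 \cdots b_s$ could a priori be $\prec$-smaller than $u$. The remaining care is bookkeeping: phrasing the induction for arbitrary block decompositions rather than restarting from letters after each peel, and invoking order-independence so that the ``contract until stalled, then peel'' schedule used in the argument computes the same output as the algorithm as literally stated.
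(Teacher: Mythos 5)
Your proof is correct and takes essentially the same route as the paper's: peel off the first block once it is $\prec$-minimal (justified by \cref{lem:smallestblock} together with preservation from \cref{lem:contraction}), and verify the nondecreasing condition between the peeled factor and the next one via \cref{lem:collapsing}, exactly as the paper does with $p \succeq u_2 \succeq u_1$. The only differences are presentational — you package the paper's iterative argument as a formal strong induction on $|w|$ and explicitly invoke order-independence of contractions, points the paper handles informally.
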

\begin{proof}
At a given step, suppose we have $k$ blocks $u_1, u_2, \dots, u_k$, so that the remaining (unfactorized) suffix of $w$ is $u_1u_2\cdots u_k$, and that these blocks satisfy the preservation condition. We have two cases based on which block is the smallest under $\prec$. \\

\noindent\textbf{Case 1}: Assume $u_1$ is smallest of the remaining blocks under $\prec$. Then by \cref{lem:smallestblock}, $u_1u_2\cdots u_i$ cannot be in $G$ for any $i \neq 1$. Since any $G$-factorization of $u_1u_2\cdots u_k$ preserves the blocks by the preservation condition, the factorization must start with $u_1$. We can take out $u_1$ as the only possible first  $G$-factor, and the resulting multiset of $k-1$ blocks $\{u_2, u_3, \dots, u_k\}$ still satisfies the preservation condition since it is a subset of the original multiset.\\

\noindent\textbf{Case 2}: Assume the smallest block under $\prec$ is not $u_1$. Then we can contract this smallest block, and the resulting multiset of blocks will still satisfy the preservation condition by \cref{lem:contraction}.\\

Since we begin with $w$ consisting of $|w|$ blocks, where each letter is its own block, the preservation condition is initially satisfied. Thus the preservation condition remains satisfied after each step, and we generate the only possible $G$-factorization of $w$. Since $w$ must have at least one $G$-factorization, the resulting factorization generated by this algorithm must be the unique valid $G$-factorization of $w$, and we are done.\end{proof}


We give an example of Melan\c{c}on's algorithm used for factorization, using the same string as the previous example. 
\begin{example}\label{exa:factorization}
Say we want to find the Nyldon factorization of the (linear) word $10001011010101$. As before, we contract the lexicographically smallest block(s) in each step.

\begin{enumerate}
    \item We start with $1, 0, 0, 0, 1, 0, 1, 1, 0, 1, 0, 1, 0, 1$.
    \item We contract the $0$s, giving $1000, 10, 1, 10, 10, 10, 1$.
    \item We contract the $1$s, giving $1000, 101, 10, 10, 101$.
    \item We contract the $10$s, giving $1000, 1011010, 101$.
    \item We contract the $1000$. It is the first block; here our treatment differs from the conjugate version of Melan\c{c}on's algorithm. The block $1000$ is taken out and becomes the first $G$-factor, leaving us with $1011010, 101$.
    \item We contract $101$, giving the block $1011010101$. We thus get a $G$-factorization of \\$(1000, 1011010101)$.
\end{enumerate}

\end{example}

We can also generalize \cref{lem:nyldonsuffix} and \cref{lem:longestsuffix} \cite[Theorems 13 and 14]{nyldon} to the case of any Nyldon-like set, using the following one.

\begin{lemma}\label{lem:melanconsuffix}
Let $s$ be a $G$-word suffix of $w$. Then, when applying the factorization version of Melan\c{c}on's algorithm to $w$, the last block will be $s$ at some step of the algorithm.
\end{lemma}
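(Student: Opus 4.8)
The plan is to write $w = ts$, where $s \in G$ is the given suffix and $t$ is a (possibly empty) prefix, and to show that throughout the run of the factorization version of Melan\c{c}on's algorithm on $w$ no block ever straddles the boundary between $t$ and $s$. Granting this, I would argue that the blocks lying inside $s$ evolve exactly as in a run of the algorithm on $s$ alone, which, since $s \in G$, forces them to merge into the single block $s$ occupying the suffix of $w$. The first thing to record is the invariant carried by the algorithm: by \cref{lem:contraction}, after every contraction (and trivially after every front-removal, since removing a factor only shrinks the collection of blocks) the current blocks satisfy the hypothesis that any $G$-factorization of any of them, taken in any order, preserves the blocks. In particular this holds for the sub-collection of blocks contained in $s$.

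The heart of the argument is the following claim. Let $c$ denote the leftmost block of $s$ (it always begins at the first position of $s$), and suppose $s$ is currently split into blocks $\beta_1 = c, \beta_2, \dots, \beta_l$ with $\beta_1\beta_2\cdots\beta_l = s$ and $l \ge 2$. Then $c$ is never the $\prec$-smallest block of $w$. Indeed, were $c$ a $\prec$-minimal block even among $\beta_1, \dots, \beta_l$, then since these blocks satisfy the hypothesis of \cref{lem:smallestblock}, that corollary would yield $s = c\,\beta_2\cdots\beta_l \notin G$, contradicting $s \in G$. Hence some $\beta_j$ with $j \ge 2$ satisfies $\beta_j \prec c$, so $c$ is not even the smallest block inside $s$, let alone the global minimum.

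I would then convert this into the no-straddling statement. The only contraction the algorithm performs is to merge the globally smallest block into the block on its left, and front-removal only deletes the leftmost block of $w$, which lies entirely in $t$ while $t$ is nonempty. Since $c$ is never the global minimum (when $l \ge 2$), it is never contracted leftward into its neighbor in $t$, and it is never partially consumed from the front. Therefore no block ever crosses the $t$--$s$ boundary, and every contraction touching $s$ occurs among the blocks of $s$ themselves. To finish, once $t$ has been removed as factors (or earlier, if the blocks of $s$ contract first), the word remaining to the right is exactly $s$, carried in some partially contracted block structure; by the invariant this is a legitimate intermediate configuration, so continuing the algorithm computes the $G$-factorization of $s$ by \cref{thm:melanconfactorization}. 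As that factorization is $(s)$ and $c$ is never smallest (so no proper prefix of $s$ is ever removed as a factor), all of $\beta_1, \dots, \beta_l$ contract into the single block $s$, which is the rightmost block; hence the last block equals $s$ at that moment.

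I expect the main obstacle to be the no-straddling step, that is, ruling out that the leftmost block of $s$ is ever the global minimum while $s$ is still split; the clean route through it is precisely the application of \cref{lem:smallestblock} to the blocks sitting inside $s$ together with the hypothesis $s \in G$. A secondary point needing care is checking that the invariant genuinely survives front-removals and that the partially contracted block structure of $s$ is a valid starting configuration for the correctness statement of \cref{thm:melanconfactorization}, so that the sub-run on $s$ indeed terminates in the single block $s$ rather than splitting off a proper factor.
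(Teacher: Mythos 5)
Your proof is correct and follows essentially the same route as the paper: the key step in both is to apply \cref{lem:smallestblock} to the blocks currently partitioning $s$, concluding that the leftmost block of $s$ can never be $\prec$-smallest while $s$ is split (else $s \notin G$), so no contraction or front-removal ever crosses the $t$--$s$ boundary and the blocks of $s$ evolve exactly as in a run of the algorithm on $s$ alone, terminating in the single block $s$. Your write-up is merely more explicit than the paper's about the invariant surviving front-removals, which is a fine addition but not a different argument.
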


\begin{proof}
Suppose we perform the factorization version of Melan\c{c}on's algorithm on $w$. If we only look at the contractions that affect the last $|s|$ letters of $w$, then the sequence of contractions will be the same as performing Melan\c{c}on's algorithm on $s$. A block that is chosen to contract always contracts to its left, so letters that are to the left of $s$ will not affect it. The only scenario that would cause the sequence of contractions on $s$ to differ from Melan\c{c}on's algorithm on $s$ is if the leftmost block of $s$, which we call $u$, has to contract to the left, and $|u| < |s|$. But then $u$ would be the smallest block of $s$ under $\prec$, contradicting the fact that $s$ is in $G$ by \cref{lem:smallestblock} (which applies because the subset of blocks making up $s$ satisfies the preservation condition).

So, since $s$ is in $G$, it becomes one block when it is $G$-factorized, and thus becomes a fully formed block through the process of performing Melan\c{c}on's algorithm on $w$.
\end{proof}

\begin{corollary}\label{cor:suffix2}
Let $(u_1, u_2, \dots, u_k)$ be the $G$-factorization of a word $x$. Then $u_k$ is the longest $G$-word suffix of $x$. Also, for a $G$-word $x$ with a proper $G$-word suffix $s$, we have $s \prec x$.
\end{corollary}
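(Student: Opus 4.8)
The plan is to extract from \cref{lem:melanconsuffix} a single structural invariant and then read off both assertions from it. The invariant I would isolate first is this: in the factorization version of Melan\c{c}on's algorithm the rightmost block can change only through a contraction of the form ``the last block merges into the block on its left,'' and each such merge produces a new last block having the old one as a suffix; a contraction among earlier blocks, or the removal of a front factor, leaves the last block untouched (except when only one block remains). Because every contraction moves a block into its \emph{left} neighbor, nothing sitting to the left of a block can ever trim it from the right. Consequently, once a given $G$-word suffix $s$ first appears as the last block---which \cref{lem:melanconsuffix} guarantees it does---every subsequent last block retains $s$ as a suffix.

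For the first assertion I would apply this at the end of the run. When the algorithm terminates, the factor $u_k$ is precisely the unique block present immediately before the final factor is output, so $u_k$ is the last block at the final stage. Given any $G$-word suffix $s$ of $x$, \cref{lem:melanconsuffix} places $s$ as the last block at some earlier stage, and the invariant then forces $s$ to be a suffix of $u_k$. Hence every $G$-word suffix of $x$ is a suffix of $u_k$, so $u_k$ is the longest among them.

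For the second assertion I would argue by induction on $|x|$, treating $s$ as a proper suffix (otherwise $s=x$ and the claim is vacuous, matching \cref{lem:nyldonsuffix}). Since $x\in G$, \cref{thm:melanconfactorization} says the algorithm's only output factor is $x$ itself, so no front factor is ever removed before the end and every step is a contraction; in particular the run passes through a stage with exactly two blocks $c_1,c_2$ whose concatenation is $x$, and the final contraction merges $c_2$ into $c_1$. A block is contracted into its left neighbor only when it is the smaller of the two, so $c_2\prec c_1$ (the alternative $c_1\preceq c_2$ would let the factorization algorithm output $(c_1,c_2)$, contradicting $x\in G$, with the case $c_1=c_2$ further excluded since then $x$ would be periodic, impossible by \cref{thm:primitive2}); and the Nyldon-like condition applied to $c_1c_2=x\in G$ gives $c_1\prec x$, whence $c_2\prec x$. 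By the invariant, $c_2$ ends in $s$. If $c_2=s$ we are done. Otherwise $s$ is a proper $G$-word suffix of the $G$-word $c_2$ (a contracted block is in $G$ by \cref{lem:contraction}) with $|c_2|<|x|$, so the inductive hypothesis yields $s\prec c_2\prec x$.

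The main obstacle I anticipate is bookkeeping rather than conceptual: verifying the invariant requires keeping the contraction direction straight (a block always merges into the block on its left, so the rightmost block only grows, by absorbing its own neighbor, and never loses its trailing suffix), and one must confirm that the two-block stage in the second part occurs at or after the moment $s$ first becomes the last block. This holds because the block count decreases monotonically throughout a contraction-only run, and $s$ cannot first appear as the last block only at the one-block stage, since that would force $s=x$, contradicting properness.
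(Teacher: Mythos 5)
Your proof is correct, and your treatment of the first assertion coincides with the paper's: both combine \cref{lem:melanconsuffix} with the invariant that each successive last block in the run contains the previous one as a suffix, so the final block $u_k$ contains every $G$-word suffix of $x$ (the paper leaves this invariant implicit; you spell it out). For the second assertion, however, you take a genuinely different and somewhat heavier route. The paper extracts the order comparison from the very same invariant: when the last block $g$ merges into its left neighbor $f$, the contraction rule forces $f \succ g$ and the Nyldon-like condition forces $fg \succ f$, so each merge strictly increases the last block under $\prec$; the successive last blocks therefore form a chain $s_1 \prec s_2 \prec \cdots \prec s_m = x$, and since $s$ occurs in this chain by \cref{lem:melanconsuffix}, the inequality $s \prec x$ follows at once. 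Your induction on $|x|$ through the penultimate two-block stage $(c_1, c_2)$ is sound: the stage exists because \cref{thm:melanconfactorization} rules out removing a front factor, your timing argument correctly places $s$ as a suffix of $c_2$, and $c_2 \prec c_1 \prec x$ holds as you argue (though the appeal to \cref{thm:primitive2} for the case $c_1 = c_2$ is superfluous, since $c_1 \preceq c_2$ already yields the forbidden factorization $(c_1, c_2)$ of $x \in G$). What the paper's argument buys is economy and uniformity: no induction, no appeal to uniqueness of the factorization, and both halves of the corollary read off a single invariant. What yours buys is locality: it only needs the properties of one contraction at the two-block stage rather than tracking $\prec$ along the whole run, at the cost of an induction and an extra theorem.
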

\begin{proof}
If $s$ is a $G$-word suffix of a $G$-word $w$, then $s$ will form when applying Melan\c{c}on's algorithm to $w$ by \cref{lem:melanconsuffix}. A block $fg$ formed by contraction of $f, g$ satisfies $fg \succ f$ by the Nyldon-like condition and $f \succ g$ since $g$ is contracted making it minimal under $\prec$, so in particular $fg \succ g$. So if the sequence of suffix blocks of $w$ formed is $s_1, s_2, \dots, s_m=w$ in order, we have $s_1 \prec s_2 \prec \cdots \prec s_m$. Then $s \prec w$ as $s$ is one of the suffixes formed.

If we perform the factorization version of Melan\c{c}on's algorithm on a general word $w$, every $G$-word suffix of $w$ forms as a block at some point by \cref{lem:melanconsuffix}, so $w$'s longest $G$-word suffix will form. As the last factor in the $G$-factorization of $w$ must be a $G$-word suffix of $w$, and all such suffixes form as a block, the last factor is the longest $G$-word suffix of $w$.
\end{proof}

The algorithm we present coincides with a more general algorithm that works for all right Hall sets.

\begin{definition}
For an alphabet $A$, a factorization $(H, \prec)$ of the free monoid $A^*$ is a set of words $H$ with a total order $\prec$ such that any word $w$ has a unique factorization $w=h_1h_2\dots h_m$ for some $m \ge 1$ such that each $h_i \in H$ and $h_1 \preceq h_2 \preceq \dots \preceq h_m$.
\end{definition}

\begin{definition}
A factorization $(H, \prec)$ of the free monoid $A^*$ is a right Hall set if for all $f, g \in H$, whenever $fg \in H$ we have $fg \succ g$; a left Hall set if for all $f, g \in H$, whenever $fg \in H$ we have $fg \prec f$; and a Viennot set if it is both a left Hall set and a right Hall set.
\end{definition}

\begin{theorem}\cite{melancon}
The conjugate and factorization versions of Melan\c{c}on's algorithm both work on any right Hall set.
\end{theorem}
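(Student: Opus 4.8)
The plan is to derive the statement from Melan\c{c}on's original theorem on Hall \emph{trees} \cite{melancon}, transported across the dictionary between the word-based Hall sets defined above and the classical tree-based ones. First I would make precise the correspondence announced before the statement: to each right Hall set of words $(H,\prec)$ I would attach a set of binary trees with an induced tree order, by reading off, for every $h \in H$ with $|h| \ge 2$, a standard decomposition $h = fg$ with $f,g \in H$; conversely I would check that the foliage map carries a classical Hall set of trees to a right Hall set of words in the sense of the definition above, the two orders corresponding. The content of this step is that the word axioms --- existence and uniqueness of the $\prec$-nondecreasing factorization together with the implication $fg \in H \Rightarrow fg \succ g$ --- are exactly what is needed to recover the recursive tree data (the left and right subtrees, and the defining inequalities) that Melan\c{c}on manipulates.

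Granting the dictionary, I would then verify that the two procedures agree step for step: one block contraction of the word-based algorithm, merging the $\prec$-smallest block into its neighbor, corresponds to a single application of Melan\c{c}on's rewriting rule to the associated sequence of Hall trees. This is a bookkeeping check, after which the correctness and termination of both the conjugate and the factorization versions on an arbitrary right Hall set follow at once from Melan\c{c}on's theorem for trees \cite{melancon} (compare the treatments in \cite{reutenauer} and \cite{reutenauerhall}). I expect the construction of the bijection and, in particular, the proof that $\prec$ on words pulls back to the classical order on trees to be the main obstacle, since this is the one point at which the abstract word-level hypotheses must be shown to encode the full tree structure.

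Finally I would stress why the result cannot simply be read off from \cref{thm:primitive2} and \cref{thm:melanconfactorization}. Those theorems were established for Nyldon-like sets and rest on the Nyldon-like condition $f \prec fg$, which enters decisively in the base case of \cref{lem:collapsing} and hence in its conclusion $w \succ u_1$. This condition can fail for a general right Hall set: taking the Lyndon words with $\prec$ equal to the reverse of $<_{\text{lex}}$, so that the Chen-Fox-Lyndon factorization becomes $\prec$-nondecreasing, gives a right Hall set, because a Lyndon word is $<_{\text{lex}}$-smaller than each of its proper suffixes, whence $fg \succ g$; yet a Lyndon word is $<_{\text{lex}}$-larger than each of its proper prefixes, so there $fg \prec f$ and the Nyldon-like condition is violated. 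Thus the right Hall sets strictly contain the Nyldon-like sets, the analogue of \cref{lem:collapsing} genuinely fails in general, and the theorem must be obtained through the tree machinery rather than by reusing the earlier lemmas.
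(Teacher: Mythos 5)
The paper does not actually prove this statement: it is quoted from the literature, with the correctness of the rewriting algorithm attributed to Melan\c{c}on \cite{melancon} and the equivalence of the word-based and tree-based definitions of Hall sets attributed to Viennot \cite{viennot}. Your proposal reconstructs exactly that route---transport the word-level data to Hall trees and invoke Melan\c{c}on's theorem for tree rewriting---so it is essentially the same approach the citation encapsulates; the only simplification available is that the dictionary you propose to build by hand (words $\leftrightarrow$ trees, with orders corresponding) is itself Viennot's cited theorem, so it can be invoked rather than re-derived. Your closing observation is also correct and worth emphasizing: the paper's internal machinery (\cref{lem:collapsing}, \cref{thm:primitive2}, \cref{thm:melanconfactorization}) rests on the Nyldon-like condition $f \prec fg$, which fails for general right Hall sets---for instance Lyndon words ordered by the reverse of $<_{\text{lex}}$ form a Viennot set in which $fg \prec f$---so the general theorem genuinely cannot be obtained by reusing those lemmas and must come from the external tree-based result.
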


The Nyldon words form a right Hall set, as explained in Sections 11 and 12 of \cite{nyldon}. By proving unique factorization, we have shown that all Nyldon-like sets are right Hall sets. Traditionally, Hall sets are defined as sets of binary trees rather than words. Melan\c{c}on's algorithm was originally described as a ``rewriting algorithm'' on sequences of Hall trees in \cite{melancon}. It turns out that in any Hall set, each binary tree corresponds to a unique word. So, we can alternatively define Hall sets as sets of words rather than trees, as above. Right Hall sets are discussed in \cite{melancon} and \cite{reutenauerhall} (where they are just referred to as Hall sets). Viennot sets are discussed in \cite{viennot}, where left Hall sets are referred to as Hall sets. The fact that the above three definitions are valid definitions for Hall sets and Viennot sets, which were originally defined differently by using binary trees, is due to Viennot \cite[Proposition 1.8]{viennot}. Beware that certain authors may use the flipped version of $\prec$. Lyndon words form a Viennot set \cite[Example 54]{nyldon}.

Note that for a right Hall set $(H, \prec)$, whenever $f, g, fg \in H$, the conditions $fg \succ g$ and $f \succ g$ are guaranteed: the first by definition, and the second because otherwise $(f,g)$ would be a valid $H$-factorization of $fg$. Nyldon-like sets have the condition $fg \succ f \succ g$, while Viennot sets have the condition $f \succ fg \succ g$. So, Nyldon-like sets and Viennot sets can be thought of as opposite extremes with respect to the relation between $fg$ and $f$.

Our method gives a way to generate a set of factorizations of the free monoid (the Nyldon-like sets) recursively without proceeding through trees first (as Hall sets are traditionally defined). To our knowledge, such a recursive method has not previously appeared in the literature. For example, Melan\c{c}on proves that his algorithm works on Hall sets, but does not give a way to generate such sets \cite{melancon}; Reutenauer explains a way to generate Hall sets recursively by making trees, rather than through the lens of unique factorization of words \cite{reutenauerhall}. Furthermore, the proof that our method works is quite elementary, but it relies on the Nyldon-like condition. We wonder whether our recursive method works for general right Hall sets, and if so, whether a similarly elementary proof can be found.

\begin{question}
Suppose we replace the condition $f \prec fg$ with $g \prec fg$ in the third condition of \cref{def:nyldonlike}. Is the resulting set a factorization of the free monoid $A^*$ (and therefore a right Hall set), i.e. is factorization unique? If so, is there an elementary proof (similar to Section 3)?
\end{question}

\section{Applications of Melan\c{c}on's Algorithm}

In \cite{nyldon}, Charlier et al.\ ask (Open Problem 38) whether for a primitive word $w$ with Nyldon conjugate $n$, and large enough $k$ depending on $w$, the word $w^k$ can be factorized as $(p_1, p_2, \dots, p_i, $ $n, n, \dots, n, s_1, s_2, \dots, s_j)$ for some $K$, where there are $k-K$ copies of $n$ in the middle. We give a positive answer to this problem and provide a bound on $K$.

Furthermore, they ask whether the set of Nyldon words of a fixed length form a circular code (Open Problem 46). We explain the definition of a circular code below, and give a positive answer to the question.

We first demonstrate that there exists such a $K$ for powers of words $w$ in the specific case of Nyldon words.

\begin{lemma}\label{lem:prevalg}\cite{nyldon}
Suppose a Nyldon word $w$ has longest proper Nyldon suffix $x$. Then if we write $w=du_1u_2\cdots u_k$ where $d$ is a single letter and $(u_1, u_2, \dots, u_k)$ is the Nyldon factorization of $d^{-1}w$. Then, $du_1u_2\cdots u_i$ is Nyldon and $du_1u_2\cdots u_i >_{\text{lex}} u_{i+1}$ for all $1 \le i \le k$. Furthermore, $u_k = x$.
\end{lemma}
\begin{proof}
See the justification of Algorithm 1 in \cite{nyldon}, in the proof of Proposition 18.
\end{proof}

\begin{lemma}\label{lem:oneindexpairnyldon}
Suppose $uv$ is Nyldon for words $u, v$ with $u$ nonempty. Then, $u^kuv$ is not Nyldon for $k \ge 1$.
\end{lemma}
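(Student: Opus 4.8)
=== PROOF PROPOSAL ===

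\textbf{The claim to prove.} We want to show that if $ab$ is Nyldon (with $a$ nonempty, $b$ possibly empty), then $a^kab$ is not Nyldon for any $k\ge 1$. Equivalently, writing $c = ab$ for the Nyldon word, we want $a^{k}c$ to fail to be Nyldon once $k\ge 1$, i.e. as soon as we prepend even a single copy of $a$.

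\textbf{Overall strategy.} The plan is to exhibit an explicit nondecreasing Nyldon factorization of $a^kab$ with at least two factors, which by \cref{def:nyldon} witnesses that $a^kab$ is not Nyldon. The natural candidate is to split off a prefix that is itself a Nyldon prefix of $a^kab$ and then use \cref{cor:suffix2} together with \cref{lem:melanconsuffix} to control the remaining suffix. Concretely, I would run the factorization version of Melan\c{c}on's algorithm on $a^kab$ and track the longest Nyldon suffix: since $c = ab$ is a Nyldon suffix of $a^kab$, by \cref{cor:suffix2} the last factor $u_j$ in the Nyldon factorization is a Nyldon suffix of $a^kab$ that is at least as long as $c$. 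The key point is to argue that this forces a nontrivial factorization, so that $a^kab$ itself cannot be a single Nyldon word.

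\textbf{Key steps in order.} First I would reduce to $k=1$: if $a^{k}c$ is not Nyldon for $k=1$, I would hope to bootstrap to general $k\ge 1$, but more robustly I would argue directly for all $k$ at once. Second, I would consider the longest Nyldon suffix $s$ of $a^kab$. If $s = a^kab$ (the whole word), then $a^kab$ is Nyldon and I must derive a contradiction; otherwise $a^kab$ has a proper Nyldon suffix and hence a factorization with $\ge 2$ factors, so it is not Nyldon and we are done. Thus the crux is ruling out $s = a^kab$. Third, assuming for contradiction that $a^kab$ is Nyldon, I would apply \cref{cor:suffix2} to the Nyldon suffix $c=ab$, obtaining $c \prec a^kab$, i.e. $ab <_{\text{lex}} a^kab$. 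I would then extract the structural consequence: since $ab$ is a prefix of $a^kab$ (as $a^kab = a^{k-1}\cdot ab \cdot \varepsilon$ shares the prefix $a$ with $ab$), I need to compare $ab$ and $a^kab$ lexicographically and combine this with the Nyldon-suffix inequality to reach a contradiction. The likely mechanism is that $ab$ being Nyldon forces $a$ to be ``large'' relative to $b$ in a way incompatible with $a^kab$ being Nyldon, because prepending copies of $a$ creates a splittable nondecreasing structure.

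\textbf{Main obstacle.} The hard part will be the third step: converting the lexicographic/order inequalities coming from \cref{cor:suffix2} into an actual admissible factorization of $a^kab$, or into a clean contradiction with the Nyldon-like condition $fg \succ f$. Unlike Lyndon words, Nyldon words are not characterized by their conjugates, so I cannot simply appeal to rotation arguments; I must work through the recursive definition and the suffix lemmas. I expect the cleanest route is to show that the Nyldon prefix structure of $ab$ (e.g. its first factor when one letter of $a$ is stripped, controlled via \cref{lem:prevalg}) lets me peel $a^kab$ into $(a, \dots)$-type factors, but making the peeling nondecreasing is the delicate combinatorial point. I would therefore invest most of the argument in \cref{lem:prevalg}, using it to understand how $ab$ was built up by adding letters on the left, and then inserting the extra copies of $a$ to produce the forbidden nondecreasing factorization.
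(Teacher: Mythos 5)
Your overall framing is reasonable---assume $a^kab$ is Nyldon and derive a contradiction, with \cref{lem:prevalg} as the main tool---but the proposal never actually produces the contradiction, and both concrete mechanisms you float fail. First, the inequality you extract from the suffix lemma, $ab <_{\text{lex}} a^kab$, has no force: \cref{lem:nyldonsuffix} guarantees exactly this for every proper Nyldon suffix of every Nyldon word, so it is perfectly consistent with $a^kab$ being Nyldon, and no contradiction can come from it alone. Second, the hoped-for ``peeling of $a^kab$ into $(a,\dots)$-type factors'' cannot be the route: a Nyldon factorization must consist of Nyldon words, and $a$ need not be Nyldon. Indeed, if $a$ \emph{is} Nyldon the lemma is a one-line triviality, since $(a, a, \dots, a, ab)$ is a nondecreasing sequence of Nyldon words (as $a \le_{\text{lex}} ab$) with at least two factors, so $a^kab$ is not Nyldon by \cref{def:nyldon}. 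The entire content of the lemma is the case where $a$ is not Nyldon, i.e., precisely where your proposed factors are inadmissible. Everything in your third step after stating the inequality (``the likely mechanism is\dots'', ``I expect the cleanest route is\dots'') describes what a proof would need to do rather than doing it.

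For comparison, here is the missing argument (the paper's proof). One may assume $a$ is primitive: if $a=(a')^t$ with $a'$ primitive, then $a^kab = (a')^{tk}\cdot a'\cdot\bigl((a')^{t-1}b\bigr)$ and $a'\cdot\bigl((a')^{t-1}b\bigr) = ab$ is Nyldon, so the primitive case suffices. Now start from $x_0 = ab$ and add the letters of $a^k$ one at a time on the left; let $y_0 = x_0, y_1, \dots, y_m = a^kab$ be the successive words at which the whole current word is Nyldon, and write $y_i = x_i y_{i-1}$. \cref{lem:prevalg} gives that each $x_i$ is Nyldon and $x_i >_{\text{lex}} y_{i-1} >_{\text{lex}} x_{i-1}$. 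Since $x_m$ is a prefix of $a^k$ and $x_m >_{\text{lex}} x_0 = ab$, prefix-comparability with $a$ forces $|x_m|>|a|$ and $a$ to be a prefix of $x_m$. Then $x_1$ is lexicographically squeezed between $ab$ and $x_m$, two words beginning with $a$, so $x_1$ must also begin with the letters of $a$; but $x_1$ is a suffix of $a^k$, so primitivity of $a$ forces $x_1 = a^j$ for some $j\ge 1$. If $j\ge 2$ then $x_1$ is periodic, hence not Nyldon (\cref{thm:nyldonprimitive}), contradicting the fact that $x_1$ is Nyldon; if $j=1$ then $x_1 = a \le_{\text{lex}} ab = x_0$, contradicting $x_1 >_{\text{lex}} x_0$. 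This squeeze on $x_1$, together with the primitivity reduction and the periodicity fact, is the content your proposal is missing.
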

\begin{proof}
Without loss of generality, let $u$ be primitive.

For sake of contradiction, suppose $u^kuv$ is Nyldon. Then, suppose we start with $uv=x_0$ and add letters of $u^k$ to the left of $uv$ one at a time. For each $i$, let $y_i$ be the word the $i$th time the whole word is Nyldon, so $y_0=x_0$; in other words, $y_i$ is the $i$th shortest Nyldon suffix of $u^kuv$ that is longer than $uv=x_0$. Define $x_i$ to be $y_i(y_{i-1})^{-1}$, so $y_i=x_iy_{i-1}$.

We have $y_i=x_ix_{i-1}\cdots x_0$, and $y_{i-1}$ is the longest proper Nyldon suffix of $y_i$. Therefore, after $i$ steps, we have a word $x_ix_{i-1}\cdots x_0$ such that $x_jx_{j-1}\cdots x_0$ is Nyldon for all $j \le i$. By \cref{lem:prevalg}, $x_i$ is always Nyldon and $x_i >_\text{lex} y_{i-1} >_\text{lex} x_{i-1}$ for $i \ge 1$. At the end of this process, we have $u^kuv=x_mx_{m-1}\cdots x_0$ for some $m$, with $x_0=uv$. Then $x_m >_\text{lex} x_0$. We know that either $x_m$ is a prefix of $u$ or $u$ is a prefix of $x_m$, and since $x_m >_\text{lex} x_0=uv$, we must have that $|x_m|>|u|$ and $u$ is a prefix of $x_m$. For all $x_i$ with $0 < i < m$, we have that $x_i$ is in between $ab$ and $x_m$ lexicographically, in particular $x_1$. The only way this is possible is if $x_1$ starts with the same $|u|$ letters as $u$, and since $u$ is primitive, $x_1=u^j$ for some $1 \le j \le m$. But then if $j \ge 2$ we have that $x_1$ is not Nyldon, and if $x_1=u$ then $x_1 \le_\text{lex} x_0$. So, we have a contradiction and we are done.
\end{proof}

\begin{theorem}\label{thm:largeenoughpower}
For a primitive word $w$ with Nyldon conjugate $n$, and large enough $k$ depending on $w$, the word $w^k$ can be factorized as $(p_1, p_2, \dots, p_i, $ $n, n, \dots, n, s_1, s_2, \dots, s_j)$ for some $K$, where there are $k-K$ copies of $n$ in the middle.
\end{theorem}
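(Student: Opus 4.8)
The plan is to exploit the periodic structure directly. Writing the conjugacy as $w = uv$ and $n = vu$, I first dispose of the trivial case $w = n$ (where $u$ or $v$ is empty and $w^k = n^k$ factorizes as $(n,\dots,n)$), so assume $u,v$ are both nonempty. The basic identity is $w^k = (uv)^k = u\,(vu)^{k-1}\,v = u\,n^{k-1}\,v$, so that $w^k$ is the word $n^{k-1}$ flanked by the fixed prefix $u$ and fixed suffix $v$. The goal then becomes to understand the Nyldon factorization of $u\,n^{k-1}\,v$ and to show that almost all of the middle $n^{k-1}$ survives as a run of $n$-factors.

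The engine is a uniform length bound on the factors, obtained from \cref{lem:oneindexpairnyldon}. Any prefix $P$ of $w^k$ has length $A|w| + R$ and equals $w^A\pi_R$ with $\pi_R$ the length-$R$ prefix of $w$; a suffix of $P$ of length exceeding $R$ then has the form $m_\rho^{B}\,\tau_\rho\pi_R$, where $\tau_\rho$ is a length-$\rho$ suffix of $w$ and $m_\rho=\tau_\rho\tau_\rho'$ is a primitive conjugate of $w$. For each residue pair $(\rho,R)$, \cref{lem:oneindexpairnyldon} (applied with $a=m_\rho$) shows that at most one exponent $B\ge 1$ can make $m_\rho^{B}\tau_\rho\pi_R$ Nyldon: if $B_*$ is the least such, then $m_\rho^{B_*+t}\tau_\rho\pi_R$ is not Nyldon for $t\ge1$. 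Since there are only $|w|^2$ residue pairs, every Nyldon suffix of every prefix of $w^k$ has length at most a constant $C$ depending only on $w$; in particular every factor of the Nyldon factorization of $w^k$ has length $\le C$. Moreover, if a single factor value $f$ repeats in a long consecutive block, then $f^t$ is a factor of the $|w|$-periodic word $w^k$ for large $t$, and the Fine--Wilf theorem together with the primitivity of $f$ and of $w$ (both non-periodic, using \cref{thm:nyldonprimitive}) forces $|f| = |w|$; thus $f$ is a conjugate of $w$, and being Nyldon it must be the unique Nyldon conjugate $n$.

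Next I peel factors off the right using \cref{cor:suffix2}, which says the last factor is the longest Nyldon suffix. Let $W_0 = w^k$ and let $W_{t+1}$ be $W_t$ with its longest Nyldon suffix removed. By the previous paragraph the removed factor depends only on $|W_t| \bmod |w|$ once $|W_t|$ is large, so the residues evolve by a fixed map $\phi$ on $\mathbb{Z}/|w|\mathbb{Z}$ that is completely independent of $k$, started at $r_0 = 0$. This deterministic sequence is eventually periodic; because the pigeonhole argument guarantees a long run of the factor $n$, and removing $n$ fixes the residue (as $|n| = |w|$), the sequence must reach within at most $|w|$ steps a residue $r^*$ with $\phi(r^*) = n$, after which it produces $n$ forever. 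Hence the suffix factors $s_1,\dots,s_j$ form a fixed sequence (independent of $k$) of at most $|w|$ words with $s_1\cdots s_j = n^{b}v$ for a constant $b$; then exactly $k-1-a-b$ copies of $n$ are peeled; and the leftover prefix $u\,n^{a}$ is a fixed word of bounded length whose own Nyldon factorization supplies the fixed factors $p_1,\dots,p_i$. Setting $K = 1+a+b$, which is constant, yields the claimed form $(p_1,\dots,p_i,\underbrace{n,\dots,n}_{k-K},s_1,\dots,s_j)$; one should also check, via the synchronization of the long power $n^{\#}$ inside the $|w|$-periodic word, that the run of $n$-factors really is aligned with genuine copies of $n$, so that the prefix and suffix split as $u\,n^a$ and $n^b v$.

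The main obstacle is the stabilization step: proving that $K$, and the blocks $p_\bullet$ and $s_\bullet$, are genuinely independent of $k$ rather than merely bounded. The clean route I see is the residue-dynamics observation above, namely that the longest-Nyldon-suffix recursion, once lengths are large, depends only on the length modulo $|w|$, so the entire right-to-left peeling is a fixed finite computation up to the point where it settles into the $n$-producing fixed point. Establishing rigorously that this fixed point is reached quickly, and that it is exactly the $n$-factor (which is where \cref{lem:oneindexpairnyldon}, Fine--Wilf, and the uniqueness of the Nyldon conjugate from \cref{thm:nyldonprimitive} all combine), is the crux; the remaining alignment and counting are then routine.
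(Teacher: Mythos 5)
Your proposal is correct, and its engine is the same as the paper's: both apply \cref{lem:oneindexpairnyldon} to the finitely many residue pairs (phase where a factor starts, phase where it ends) to conclude that only boundedly many words, each of bounded length, can ever occur as Nyldon factors of a power $w^k$, and both use \cref{thm:nyldonprimitive} to force any repeated factor to be $n$. Where you diverge is the second half. The paper identifies repeats with $n$ by an index-congruence remark (two consecutive equal factors force the factor's length to be $\equiv 0 \pmod{|w|}$, so it is a conjugate of a power of $w$, hence of $w$ itself since no periodic word is Nyldon) and stops there: each non-$n$ value then occurs at most once, so the non-$n$ factors have bounded total length and $K$ is bounded. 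You replace this with a Fine--Wilf argument on long runs (correct, and sufficient for the purpose), and then add something the paper's proof of \cref{thm:largeenoughpower} does not contain: right-to-left peeling via \cref{cor:suffix2}, together with the observation that once every Nyldon suffix of every prefix has length at most a constant $C$, the longest-Nyldon-suffix map depends only on the length modulo $|w|$. This residue dynamics yields a strictly stronger conclusion --- the words $p_\bullet$, $s_\bullet$ and the deficiency $K$ are literally independent of $k$, with $j \le |w|$ --- which the paper itself only obtains later and by entirely different means, namely Melan\c{c}on's algorithm in \cref{thm:infectiongeneral}, \cref{cor:boundonpower} and \cref{thm:boundonpower}. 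The trade-off is effectiveness: the Melan\c{c}on route gives the explicit bound $K \le \lfloor \log_2 |w|\rfloor + 1$ for Nyldon words, whereas your constant $C$, coming from the unknown minimal exponents $B_*$ in \cref{lem:oneindexpairnyldon}, is not explicit. Finally, the alignment step you flag as the remaining check is real but routine: a long run $n^r$ sits inside $n^{k+1} = v\,w^k\,u$, and a primitive word occurs in its own square only at the trivial positions, so the run is automatically aligned with genuine copies of $n$, giving the split $u\,n^a$ and $n^b v$ you need.
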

\begin{proof}
Suppose $w=w_1w_2\cdots w_{\ell}$ are the letters in $w$, so $|w|=\ell$. By \cref{lem:oneindexpairnyldon}, in a word of the form $w^k$, there is a longest possible Nyldon factor that starts at $w_i$ and ends at $w_j$ for a fixed $i$ and $j$, since, if say $w_iw_{i+1}\cdots w_{\ell}w^aw_1w_2\cdots w_j$ is Nyldon, then $w_iw_{i+1}\cdots w_{\ell}w^bw_1w_2\cdots w_j$ cannot be Nyldon for $a\neq b$ and $a, b$ positive. So, there is a finite number of distinct possible words that can appear in the Nyldon factorization of a power of $w$. In fact, we can have at most $2$ factors starting at an index $i$ and ending at an index $j$ for given $i, j$. In addition to the possible $w_iw_{i+1}\cdots w_{\ell}w_1w_2\cdots w_j$, if $a$ is the smallest positive integer such that $w_iw_{i+1}\cdots w_{\ell}w^aw_1w_2\cdots w_j$ is Nyldon, then $w_iw_{i+1}\cdots w_{\ell}w^bw_1w_2\cdots w_j$ cannot be Nyldon for $b>a$ by \cref{lem:oneindexpairnyldon}, so only two Nyldon words can start and end at given indices $i, j$, respectively, modulo $\ell$. Furthermore, we cannot have a repeated such factor in the Nyldon factorization of $w$ unless $i\equiv j+1$ (mod $\ell$), as otherwise the starting indices of two consecutive factors are different. This scenario is only possible when the repeated factor is $n$. So, for large enough $k$, the word $n$ must appear as a Nyldon factor in $w^k$, as the total possible length of the $p_{\ell}$ and $s_m$ is bounded.
\end{proof}
The proof of \cref{lem:oneindexpairnyldon} (and therefore of \cref{thm:largeenoughpower}) uses the nature of lexicographic order and so does not easily generalize to a general Nyldon-like set. However using Melan\c{c}on's algorithm, we can provide a logarithmic bound on $K$ for any Nyldon-like set $(G, \prec)$ with some more work. We encourage the reader to follow \cref{exa:infection} to help with understanding the proof of \cref{thm:infectiongeneral}, especially the central claim of the proof.

\begin{theorem}\label{thm:infectiongeneral}
Fix a Nyldon-like set $(G, \prec)$. Suppose $n$ is a $G$-word. Let $s$ be a nonempty suffix of $n$ (possibly all of $n$). Then, for any word $a$, the word $sn^ka$ cannot be in $G$ if $k> \log_2(|n|).$ Furthermore, the $G$-factorization of $n^ka$ begins with a $G$-factor $x$ that is at least as long as $n$ such that $x \succeq n$.
\end{theorem}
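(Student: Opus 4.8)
The plan is to run the factorization version of Melan\c{c}on's algorithm and to track the \emph{leftmost} block, exploiting two features of the contraction dynamics. First, a block is always contracted into the block on its left, so the leftmost block never disappears and can only grow. Second, as recorded in the observation after \cref{thm:primitive2}, a block is never contracted into an equal block to its left, so a run of identical adjacent blocks can never collapse on its own. I would prove the two assertions in the order they are stated, establishing the statement about the first factor of $n^ka$ first and then reusing the same bookkeeping for the non-membership claim.

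For the claim on the first $G$-factor $x$ of $n^ka$, I follow the leftmost block $B$. First I argue that $B$ passes through the value $n$: by the periodic-word analysis inside the proof of \cref{thm:primitive2}, no contraction merges the first copy of $n$ with the second copy until that first copy has been assembled into the single block $n$, so $B$ reaches the value $n$. Moreover $B$ is never removed while it is still a proper prefix of this first copy; were it removed, it would be $\prec$ every current block, hence, restricting attention to the blocks lying inside the first copy (whose concatenation is exactly $n$ since no boundary has been crossed), $B$ would be the $\prec$-minimum and first block of a $G$-decomposition of $n$, contradicting $n\in G$ by \cref{lem:smallestblock}. This is precisely the mechanism used in the proof of \cref{lem:melanconsuffix}, and it is legitimate because \cref{lem:contraction} keeps the block-preservation hypothesis valid throughout the run. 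Once $B=n$, its right neighbour is the second copy $n$, which is equal and so cannot contract into it; thus $B$ only grows by absorbing to the right, and each contraction strictly increases its value. Hence when $B$ is removed as the first factor we get $x=B\succeq n$ and $|x|\ge|n|$ (for $k\ge 2$ in fact $x=n$, while for $k=1$ the block may grow into $a$).

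For the non-membership claim I argue by contradiction: suppose $sn^ka\in G$ and run the algorithm on it, so that it must collapse to the single block $sn^ka$. Away from the two ends the interior copies of $n$ are again assembled into the block $n$, and by the equal-block observation these identical blocks cannot contract into one another. Therefore the only way the run of $n$-blocks can be dismantled is for a neighbouring block to have grown to a value \emph{strictly} exceeding $n$, after which it may swallow the $n$-blocks one at a time. Since the word begins with the suffix $s$ of $n$, the leftmost block starts strictly below $n$ in value and with length at most $|s|<|n|$, and I would track how far to the right it must reach before its value can first exceed $n$. The core of the argument is a doubling estimate: comparing the growing leftmost block against the rotations of $n$ forced by the periodic region, and using \cref{cor:suffix2} together with \cref{lem:smallestblock} to forbid short $\prec$-minimal blocks, one shows that each time the block can legitimately extend past a further copy of $n$ while remaining $\prec n$, its length at least doubles; consequently it can first exceed $n$ only within the first $\log_2|n|$ copies.

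The main obstacle is exactly this doubling step: controlling the ``transient'' deviation of Melan\c{c}on's block structure from the purely periodic pattern near the left boundary, and showing that this transient region spans at most $\log_2|n|$ copies of $n$. Granting it, the proof finishes quickly: if $k>\log_2|n|$ then at least one interior copy of $n$ lies beyond the reach of any block that could grow past $n$, so an adjacent pair of equal $n$-blocks survives, no further contraction is possible there, and $sn^ka$ can never become a single block, contradicting $sn^ka\in G$. The statement on the first factor $x$ proved above is what feeds this argument, since it pins down that any block straddling the periodic region and capable of absorbing a copy of $n$ must itself be at least as long as, and $\succeq$, the word $n$.
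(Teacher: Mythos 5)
Your proposal has two genuine gaps, and they sit exactly where the theorem's difficulty lies. First, your proof of the claim about the first factor rests on the assertion that ``no contraction merges the first copy of $n$ with the second copy until that first copy has been assembled into the single block $n$,'' justified by appeal to the periodic-word analysis inside \cref{thm:primitive2}. That analysis is only valid for the pure circular power $n^k$: there, all $n$-groups are identical at every stage and contract identically, which is what confines contractions within groups. In $n^ka$ the suffix $a$ destroys this symmetry: blocks from $a$ contract into the rightmost group, that group then differs from the others, and the disturbance (the ``infection'' in the paper's terminology) can propagate leftward into a group \emph{before} that group has fully assembled. The paper's worked example shows this explicitly: the infection repeatedly spreads to the neighbouring group while that group still consists of several blocks, not one. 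So the statement you cite is not available from \cref{thm:primitive2}; it is essentially equivalent to what the theorem is trying to prove, and it is only true once one knows $k>\log_2|n|$ --- which is the quantitative content you have not established. Your first part therefore assumes its conclusion.

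Second, for the non-membership claim you openly defer the ``doubling estimate'' (``Granting it, the proof finishes quickly''). That estimate is the entire core of the paper's proof, and it is not a routine verification: the paper shows that each time the infection spreads to a new $n$-group, the number of blocks in every uninfected group to its left has at least halved, via a case analysis of eight merge types tracking a state (blocks left, merge counter, greater/lesser/unclear), including the delicate ``quirky'' step in which blocks-left decreases with no accompanying merge and the condition flips from lesser to greater. Your sketched substitute also tracks the wrong object in the wrong direction: contractions in Melan\c{c}on's algorithm always merge a block into its \emph{left} neighbour, so the destabilizing influence propagates from $a$ (the right end) leftward; the leftmost block seeded by $s$ is not the mechanism by which the run of $n$-copies is dismantled, and a length-doubling claim about that block does not substitute for the halving of block counts per group under right-to-left infection. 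As it stands, both halves of your argument rest on the key quantitative fact rather than proving it.
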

\begin{proof}
We perform the factorization version of Melan\c{c}on's algorithm on $sn^ka$. The main idea is that if there are enough repeated blocks of a $G$-word in the middle of a word, then the influence of extra digits on the right cannot reach the leftmost block.

We have $k$ consecutive factors $n$ in our word; let us call these \textit{$n$-groups}. We aim to prove that the leftmost $n$-group fully forms as a block, that is, a block in the final factorization begins with the entire leftmost $n$-group. Say that an $n$-group has become \textit{infected} when a block to the right of the $n$-group contracts into the rightmost block of the $n$-group. In other words, an $n$-group becomes infected when it no longer preserves the blocks. An $n$-group that is not infected is called \textit{uninfected}. Note that uninfected $n$-groups are always identical and contract identically, so in particular at any point all uninfected $n$-groups will be composed of the same number of blocks.

We will prove the following central claim: consider two consecutive $n$-groups $n_1, n_2$. Note that before $n_1$ and $n_2$ are infected, they look identical. Suppose that they are made up of $m$ blocks each right before $n_2$ is infected (with respective blocks identical between the two $n$-groups). Then, $n_1$ cannot become infected while it consists of more than $\frac{m}{2}$ blocks.

Consider two consecutive $n$-groups in our word, which we hereafter refer to as $n_1$ and $n_2$ (with $n_1$ on the left). We look at the course of infection through $n_2$. We label a configuration of (uninfected $n_1$, infected $n_2$) with a triple (condition, blocks left, merge counter). We say that the leftmost infected block is the leftmost block in $n_2$ that is not identical to a corresponding block in $n_1$. All blocks to the right of this block are also considered to be infected. ``Blocks left'' is the number of blocks in $n_2$ to the left of the leftmost infected block (so essentially the number of blocks that are identical between the two $n$-groups on the left). ``Merge counter'' is the number of merges that have happened within $n_1$ after the infection reached $n_2$, and so it is also equal to the number of merges that have happened in any $n$-group to the left of $n_2$. Finally, if ``blocks left'' is $c$, we look at the $(c+1)$st blocks in each $n$-group. Call these $u_{c+1}$ in $n_1$ and $v_{c+1}$ in $n_2$. We set ``condition'' to ``greater'' if $v_{c+1} \succeq u_{c+1}$, and ``lesser'' if $u_{c+1} \succ v_{c+1}$. We start the infection of $n_2$ in a state (greater, $m-1$, $0$) if $n_1$ has $m$ blocks.

We now outline the possibilities for this state to change. Suppose we start in a state (condition, $c$, $d$). Let the blocks in $n_1$ be $u_1, u_2, \dots, u_a$ and the blocks in $n_2$ be $v_1, v_2, \dots, v_b$. Note that the first $c$ blocks in $n_1$, namely $u_1, u_2, \dots, u_c$, are respectively equal to the first $c$ blocks in $n_2$, namely $v_1, v_2, \dots, v_c$. If for some $i < c$ we have that $u_i$ and $u_{i+1}$ merge, then $v_i, v_{i+1}$ merge as well, and we are in a state (condition, $c-1$, $d+1$). This is Merge $1$ in the enumeration below.

If for some $i > c$ we have that $u_i, u_{i+1}$ merge, then ``merge counter'' goes up, while ``blocks left'' remains the same. It is possible for the merge to be with blocks $u_{c+1}$ and $u_{c+2}$. Consider this case. If $u_{c+1} \succ v_{c+1}$, then by the Nyldon-like condition, $u_{c+1}u_{c+2} \succ u_{c+1} \succ v_{c+1}$. So, a condition ``lesser'' remains the same. However, if $u_{c+1} \preceq v_{c+1}$, it is not immediately clear where $u_{c+1}u_{c+2}$ lies within this hierarchy. So, a condition ``greater'' could become ``lesser'', and for convenience we refer to it as ``unclear.'' These are Merges $2$ and $3$ in the enumeration below.

If for some $i >c$ we have that $v_i, v_{i+1}$, then nothing can change unless $i=c+1$. If $u_{c+1} \succ v_{c+1}$, then it is unclear where $v_{c+1}v_{c+2}$ goes in this hierarchy, so we say that ``lesser'' can become ``unclear''. Meanwhile if $v_{c+1} \succeq u_{c+1}$, then $v_{c+1}v_{c+2} \succ v_{c+1} \succeq u_{c+1}$, so ``greater'' remains the same. These are Merges $4$ and $5$ in the enumeration below.

The interesting situation is when merges happen involving $u_c$ and $u_{c+1}$ or $v_c$ and $v_{c+1}$; we say that these merges (and only these merges) spread the infection. Specifically, these merges spread the infection leftward by one block, as post-merge the $c$th blocks of each $n$-group are different. If $u_{c}$ and $u_{c+1}$ merge, and $v_{c}, v_{c+1}$ merge, then we end up in the state (unclear, $c-1$, $d+1$) (Merge 6). But perhaps a merge happens in one $n$-group and not the other. If $u_c$ merges with $u_{c+1}$ but $v_c$ does not merge with $v_{c+1}$, then since $u_cu_{c+1} \succ u_c=v_c,$ we end up in the state (lesser, $c-1$, $d+1$) (Merge 7). The last case (Merge 8) is if $v_c$ and $v_{c+1}$ merge, but $u_c$, $u_{c+1}$ do not. This phenomenon is only possible in the state ``lesser''. As $v_cv_{c+1} \succ v_c=u_c$, we go from (lesser, $c$, $d$) to (greater, $c-1$, $d$). Call this type of step \textit{quirky}.

Finally, the infection can only spread to $n_1$ if $v_1$ is smaller than $u_1$ under $\prec$. So, we must end in a ``lesser'' state. 

To summarize, the possible merges are
\begin{enumerate}
    \item ``merge counter'' goes up by $1$ and ``blocks left'' goes down by $1$
    \item ``merge counter'' goes up by $1$, condition goes from ``greater'' to ``unclear''
    \item ``merge counter'' goes up by $1$, condition remains at ``lesser''
    \item ``condition'' remains at ``greater''
    \item ``condition'' goes from ``lesser'' to ``unclear''
    \item ``merge counter'' goes up by $1$, ``blocks left'' goes down by $1$, and ``condition'' goes to ``unclear''
    \item ``merge counter'' goes up by $1$, ``blocks left'' goes down by $1$, and ``condition'' becomes ``lesser''
    \item ``blocks left'' goes down by $1$ and ``condition'' goes from ``lesser'' to ``greater'' (the quirky step)
\end{enumerate}

Pre-infection, when the two $n$-groups are identical, say we start with $m$ blocks in each group. Once $n_2$ gets infected, the $m$th block of $n_1$ is now shorter and smaller than the $m$th block of $n_2$, and we start in the state (greater, $m-1$, $0$).

Normally, ``merge counter'' goes up by $1$ whenever ``blocks left'' goes down by $1$ (Merges 1, 2, 3, 6, 7). The case when ``blocks left'' goes down by $1$ but ``merge counter'' does not increase is only possible in the quirky step (Merge 8), where condition changes from ``lesser'' to ``greater.'' So, if it takes $f$ steps to get to ``lesser'' for the first time, we go from (greater, $m-1$, $0$) to (lesser, $m-1-f$, $f$). Every quirky step is accompanied by changing the condition from ``lesser'' to ``greater,'' and to go back to ``lesser'' we must have some number of normal steps, each where merge counter goes up by $1$ and blocks left goes down by at most $1$, as Merges 4 and 5 cannot help in this regard. So, if we start at (shorter, $m-1-f$, $f$) and end at (shorter, $0$, $d$) for some $d$ the number of normal steps is at least the number of quirky steps. Therefore $d$ is at least $f+\frac{m-1-f}{2} \ge \frac{m}{2}$. Thus all $n$-groups to the left of the infected $n$-group $n_2$ undergo at least $\frac{m}{2}$ merges, so we end up with at most $\lfloor \frac{m}{2} \rfloor$ blocks in $n_1$ by the time the infection spreads, proving our central claim.

So, each time the infection spreads to a new $n$-group, the number of blocks in each uninfected $n$-group to its left has decreased by at least half. For example, if we have three consecutive $n$-groups $n_1, n_2, n_3$, and there are $m$ blocks in $n_2$ right before the infection spreads to it, there will be at most $\frac{m}{2}$ blocks in $n_1$ right before the infection spreads to it. Before any $n$-groups are infected, each $n$-group has at most $|n|$ blocks.

Thus before the infection spreads to any more than $\lfloor \log_2(|n|) \rfloor$ $n$-groups, an uninfected $n$-group will fully form, i.e., the $G$-word $n$ will appear as a block. This block will equal any $n$-groups to its left, and so by the Nyldon-like condition, it will remain at least as big if blocks contract onto it from the right. So, this fully formed, uninfected $n$-group cannot never merge to the left.

Therefore, the leftmost of the $k$ $n$-groups will fully form, and so appending the $G$-factorization of $s$ to the $G$-factorization of $n^ka$ will yield a valid $G$-factorization of $sn^ka$, as by \cref{cor:suffix2} the last $G$-factor in $s$ is smaller (under $\prec$) than the first $G$-factor of $n^ka$. Thus $sn^ka$ is not in $G$ and in particular, the leftmost $G$-factor $x$ of $n^ka$ will be $n$ with possible $G$-words merged onto it from the right. So $x \succeq n$ by the Nyldon-like condition and we are done.
\end{proof}

\begin{corollary}\label{cor:boundonpower}
Fix a Nyldon-like set $(G, \prec)$. For a primitive word $w$ with $G$-word conjugate $n$, and large enough $k$ depending on $w$, the word $w^k$ can be $G$-factorized as $$(p_1, p_2, \dots, p_i, n, n, \dots, n, s_1, s_2, \dots, s_j)$$ for $K \le \lfloor \log_2(|w|) \rfloor +2$, where there are $k-K$ copies of $n$ in the middle.
\end{corollary}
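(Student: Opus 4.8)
The plan is to use the conjugacy relation to put $w^k$ in a form adapted to \cref{thm:infectiongeneral}, locate the maximal block of factors equal to $n$ in the unique $G$-factorization of $w^k$, and bound the material on either side of that block. Write $w = uv$ so that its $G$-word conjugate is $n = vu$; then $w^k = u\,n^{k-1}\,v$ and $|u| + |v| = |w| = |n|$. If $u$ or $v$ is empty then $w = n \in G$ and $w^k = n^k$ factorizes as $(n,n,\dots,n)$, so we may assume $u,v$ are nonempty. Since $n$ is a $G$-word it is primitive (\cref{thm:primitive2}), so the only occurrences of $n$ as a factor of $w^k = u\,n^{k-1}\,v$ are at the ``grid'' positions $|u|, |u|+|n|, \dots$; any other occurrence would overlap a grid copy and force $n$ to be non-primitive.

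Now take the $G$-factorization $(f_1,\dots,f_t)$ of $w^k$ (\cref{thm:melanconfactorization}). Since it is $\preceq$-nondecreasing, the factors equal to $n$ form a single contiguous block $f_a = \dots = f_b = n$; call $f_1\cdots f_{a-1}$ the \emph{prefix region} and $f_{b+1}\cdots f_t$ the \emph{suffix region}. By the grid observation the first and last $n$-factors begin and end at grid boundaries, so the prefix region equals $u\,n^{r}$ and the suffix region equals $n^{c}\,v$ for some $r,c \ge 0$ with $r + c + (b-a+1) = k-1$. Because splitting a $G$-factorization at a factor boundary yields the $G$-factorizations of the two pieces (by uniqueness, as each piece is $\preceq$-nondecreasing), $(f_1,\dots,f_{a-1})$ is the $G$-factorization of $u\,n^{r}$ and $(f_{b+1},\dots,f_t)$ is that of $n^{c}\,v$. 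Counting lengths, the total material outside the $n$-block is $|u| + r|n| + c|n| + |v| = (r+c+1)|n|$, so it suffices to show $r + c \le \lfloor \log_2|w| \rfloor + 1$; the block then contains $k-1-c \ge k-K$ copies of $n$ with $K = \lfloor\log_2|w|\rfloor + 2$, and any excess copies may be folded into the $p$- or $s$-lists.

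The prefix region is handled cleanly and forces $r = 0$. Suppose $r \ge 1$. Then $n$ is a $G$-word suffix of $u\,n^{r}$, so its longest $G$-word suffix, which is the last factor $f_{a-1}$ by \cref{cor:suffix2}, has length at least $|n|$ and has $n$ as a suffix. Applying the suffix inequality of \cref{cor:suffix2} to the $G$-word $f_{a-1}$ with $G$-word suffix $n$ gives $n \prec f_{a-1}$. But $f_{a-1} \prec n$, since $f_{a-1}$ precedes the maximal $n$-block and is not equal to $n$, a contradiction; hence $r = 0$ and the prefix region is just $u$.

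It remains to bound the suffix region, which I expect to be the main obstacle, since the suffix inequality of \cref{cor:suffix2} has no prefix analogue and so the symmetric trick is unavailable. Here I would invoke \cref{thm:infectiongeneral} applied to $n^{c}\,v$: its first factor satisfies $f_{b+1} \succ n$ strictly, by maximality of the block. On the other hand, running the factorization version of Melan\c{c}on's algorithm on $n^{c}\,v$, the infection argument in the proof of \cref{thm:infectiongeneral} shows that the infection emanating from $v$ reaches at most $\lfloor\log_2|n|\rfloor$ of the $n$-groups. Thus if $c \ge \lfloor\log_2|n|\rfloor + 2$, at least two of the leftmost $n$-groups remain uninfected; the leftmost such group has an identical clean copy of $n$ immediately to its right, so nothing merges onto it and it forms the first factor as a clean $n$, giving $f_{b+1} = n$ and contradicting $f_{b+1} \succ n$. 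Hence $c \le \lfloor\log_2|n|\rfloor + 1$. Combining with $r = 0$ yields $r + c \le \lfloor\log_2|n|\rfloor + 1 = \lfloor\log_2|w|\rfloor + 1$, which gives the claimed bound $K = \lfloor\log_2|w|\rfloor + 2$ for all sufficiently large $k$.
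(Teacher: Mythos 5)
Your argument is correct in most of its parts but has one genuine gap: it presupposes that the $G$-factorization $(f_1,\dots,f_t)$ of $w^k$ contains at least one factor equal to $n$. If no factor equals $n$, the indices $a,b$ do not exist, the regions $u\,n^r$ and $n^c\,v$ are undefined (the grid observation only locates factors equal to $n$, so without them the boundary between your two regions need not lie on the grid at all), and both of your contradiction arguments collapse, since each one compares a neighboring factor against $f_a=f_b=n$. This existence statement is not a harmless boundary case: it is essentially the content of the corollary (and of Open Problem 38) that copies of $n$ appear as factors at all, so assuming it is close to circular. Nor does \cref{thm:infectiongeneral} as stated supply it: the theorem only says that the factorization of $n^{k'}a$ begins with a factor $x\succeq n$ of length at least $|n|$, which is perfectly consistent with no factor being equal to $n$.

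The paper avoids this by \emph{building} the factorization instead of dissecting it: since the factorization of $n^{\lfloor\log_2|w|\rfloor+1}v$ begins with a factor $x\succeq n$, prepending $k-2-\lfloor\log_2|w|\rfloor$ copies of $n$, and then the factorization of $u$ (whose last factor is a proper $G$-word suffix of $n$, hence $\prec n$ by \cref{cor:suffix2}), produces a $\preceq$-nondecreasing sequence of $G$-words concatenating to $w^k$; by uniqueness this \emph{is} the $G$-factorization, and the $n$-factors are exhibited outright. Your proof is repairable with your own tools in the same spirit: apply your ``two uninfected buffer groups'' refinement directly to $n^{k-1}v$ (rather than to $n^c v$ inside a contradiction) to conclude that for $k-1\ge\lfloor\log_2|n|\rfloor+2$ the first factor of the factorization of $n^{k-1}v$ is exactly $n$; then your splitting observation gives existence of the block, and the rest of your analysis — the primitivity/grid argument, $r=0$ via \cref{cor:suffix2}, and the bound $c\le\lfloor\log_2|n|\rfloor+1$ — goes through and recovers the paper's bound. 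Two caveats: the clean-formation claim rests on material inside the proof of \cref{thm:infectiongeneral}, not its statement (you need that a fully formed group, with an identical uninfected group as a buffer to its right, can never absorb material or merge leftward, which follows from the contraction rule together with the Nyldon-like condition $fg\succ f$), so that step should be spelled out; and your prefix argument via \cref{cor:suffix2}, while correct and a nice alternative to the paper's use of the factorization of $u$, should note explicitly that $|f_{a-1}|=|n|$ would force $f_{a-1}=n$, so that $n$ is a \emph{proper} suffix of $f_{a-1}$ before the suffix inequality is invoked.
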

\begin{proof}
This corollary follows directly from \cref{thm:infectiongeneral}. The power $w^k$ is $sn^{k-1}a$ for some arbitrary $a$ and $s$ a suffix of $n$, so since the $G$-factorization of $n^{\lfloor \log_2(|w|) \rfloor+1}a$ begins with a $G$-factor $x \succeq n$, the $G$-factorization of $w^k=sn^{k-1}a$ begins with the $G$-factorization of $s$, followed by at least $k-1-(\lfloor \log_2(|w|) \rfloor+1)$ copies of $n$.
\end{proof}

Using the nature of lexicographic order, we can slightly improve the bound for Nyldon words.

\begin{theorem}\label{thm:boundonpower}
For a primitive word $w$ with Nyldon conjugate $n$, and large enough $k$ depending on $w$, the word $w^k$ can be Nyldon factorized as $$(p_1, p_2, \dots, p_i, n, n, \dots, n, s_1, s_2, \dots, s_j)$$ for $K \le \lfloor \log_2(|w|) \rfloor +1$, where there are $k-K$ copies of $n$ in the middle.
\end{theorem}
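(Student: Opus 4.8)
The plan is to follow the reduction already used in \cref{cor:boundonpower} and then recover one copy of $n$ by a lexicographic refinement of the boundary between the clean region and the infected region. Writing $w=w_1w_2$ with $n=w_2w_1$, we have $w^k=s\,n^{k-1}a$ where $s=w_1$ is a suffix of $n$ and $a=w_2$. First I would peel off the prefix $s$: by \cref{cor:suffix2} the last Nyldon factor of $s$ is a proper Nyldon suffix of $n$, hence $<_{\text{lex}}n$, so the Nyldon factorization of $w^k$ is the factorization of $s$ followed by that of $n^{k-1}a$. It then suffices to count the clean copies of $n$ heading the factorization of $n^{k-1}a$. Running the infection analysis of \cref{thm:infectiongeneral}, the infection started by $a$ propagates leftward through at most $\lfloor\log_2|n|\rfloor$ of the $n$-groups, so at least the leftmost $k-1-\lfloor\log_2|n|\rfloor$ groups are uninfected, and each such group forms the block $n$.

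The single lost copy in \cref{cor:boundonpower} comes from the fact that, for a general Nyldon-like set, \cref{thm:infectiongeneral} only guarantees that the leftmost factor of $n^{j}a$ is $\succeq n$: the rightmost clean $n$ may sit just to the left of the infected region and be swallowed into a longer boundary factor $x\succ n$ straddling nearly two consecutive copies of $n$, so it does not survive as a clean factor. The lexicographic improvement is to show that for Nyldon words this boundary factor cannot consume a second full copy of $n$. The natural tool is \cref{lem:oneindexpairnyldon}: a Nyldon factor arising in the power $n^{k-1}a$ cannot contain two consecutive full periods $n\,n$ at its front, which pins the boundary factor to length $<2|n|$ and shows that at most the material of one copy (not two) is absorbed at the clean/messy interface. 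Combining this with the infection bound, I would conclude that $n^{\lfloor\log_2|w|\rfloor}a$ already has its factorization beginning with a factor $\succeq_{\text{lex}}n$, so that one may strip clean copies of $n$ off $n^{k-1}a$ until only $n^{\lfloor\log_2|w|\rfloor}a$ remains. This yields $k-1-\lfloor\log_2|w|\rfloor$ clean copies, i.e. $K=\lfloor\log_2|w|\rfloor+1$.

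The hard part will be the boundary step itself, namely making precise—via the lexicographic input rather than the purely order-theoretic Nyldon-like condition—that the single Nyldon factor straddling the last clean $n$ and the infected region absorbs strictly less than one further copy of $n$. Establishing this cleanly requires understanding the leftmost block of the infected region after the contractions of \cref{thm:infectiongeneral} have taken place and comparing it lexicographically with $n$, and I expect the delicate case to be a short absorption in which only a proper prefix of the next copy merges leftward; ruling out an accompanying second absorption is exactly where \cref{lem:oneindexpairnyldon} and ``the nature of lexicographic order'' must be invoked, and where the Nyldon-like argument of \cref{cor:boundonpower} alone falls one copy short.
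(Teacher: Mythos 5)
Your reduction $w^k = s\,n^{k-1}a$ and the infection framework match the paper, but the mechanism you propose for recovering one copy is not the one that works, and it cannot be made to work. The paper's proof of \cref{thm:boundonpower} gets its improvement at the \emph{start} of the infection, not at the clean/messy boundary: since the Nyldon conjugate $n$ begins with its maximal letter, so does $a$ (a prefix of $n$), hence by the time the first block of $a$ becomes minimal and contracts onto the rightmost $n$-group, all length-$1$ blocks have already been absorbed. Thus the rightmost group consists of at most $|w|/2$ blocks when the infection starts, so the halving argument needs only $\lfloor\log_2(|w|/2)\rfloor+1 = \lfloor\log_2|w|\rfloor$ groups for the leftmost group to fully form; that group then cannot contract leftward because its left neighbor is a proper Nyldon suffix of $n$, hence $<_{\text{lex}} n$ by \cref{lem:nyldonsuffix}. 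Nothing in your proposal plays this role.

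Your boundary-factor idea has two concrete problems. First, the accounting: the copies lost in \cref{cor:boundonpower} are (a) the split copy $s,a$, (b) the infected groups, at most $\lfloor\log_2|n|\rfloor$ of them, and (c) the single group heading the boundary factor $x$. Whatever $x$ absorbs beyond its leading $n$ comes from groups already counted in (b), so proving $|x|<2|n|$ reduces none of the three terms; the saving must come from (b), as in the paper. Second, and more fatally, the inference from ``$|x|<2|n|$'' to ``the factorization of $n^{\lfloor\log_2|w|\rfloor}a$ begins with a factor $\succeq n$'' is backwards. Suppose the factorization of $n^{j+1}a$ is $(nu, f_2,\dots,f_r)$ with $u$ nonempty and $|u|<|n|$, so that $u$ is a proper prefix of $n$. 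The junction condition holds for the candidate factorization of $n^{j}a$ given by the factorization of $u$ followed by $(f_2,\dots,f_r)$, since the longest Nyldon suffix of $u$ is a proper Nyldon suffix of $nu$, hence $<_{\text{lex}} nu \le_{\text{lex}} f_2$; by uniqueness this \emph{is} the factorization of $n^{j}a$, and its first factor is a prefix of $u$, hence a proper prefix of $n$, hence $<_{\text{lex}} n$. So a short-but-nontrivial boundary factor at level $j+1$ would make your intermediate claim at level $j$ false: what you actually need is that the boundary factor is exactly $n$, which is equivalent to the claim you are trying to establish, so the route is circular. (In addition, \cref{lem:oneindexpairnyldon} applied to $x = n\cdot ny$ requires $ny$ to be Nyldon, which you do not know, so even the bound $|x|<2|n|$ is not justified as stated.)
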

\begin{proof}
We exactly follow the proof of \cref{thm:infectiongeneral}, but are able to get an improvement of $1$.

Suppose we use Melan\c{c}on's algorithm on a word $w^k$ for $k \ge \lfloor \log_2(|w|) \rfloor +1$. We then have $\lfloor \log_2(|w|) \rfloor$ $n$-groups in the middle of our word, with some suffix on the right. Suppose that when something from the right suffix contracts onto the rightmost $n$-group, that $n$-group consists of $x$ blocks. Then, each time a $n$-group is infected, the number of blocks in the current $n$-group must decrease by at least half. If the number of $n$-groups is at least $\lfloor \log_2(x) \rfloor +1$, then the leftmost $n$-group will fully form, and so not be able to contract onto any block to its left, as such a block is a Nyldon suffix of $n$, and therefore lexicographically less than $n$ by \cref{lem:nyldonsuffix}.

Clearly $n$ starts with its maximal letter, so the right suffix (after the $k-1$ blocks of $n$ in $w^k$) does as well. So, by the time the maximal digit has been contracted in Melan\c{c}on's algorithm, all digits will have been contracted and there will be no blocks left of length $1$. Thus, the total number of blocks in the rightmost $n$-group, $x$, is at most $\frac{|w|}{2}$ when something contracts onto it. The infection then spreads to at most $\lfloor \log_2(x) \rfloor+1 \le \lfloor \log_2(w) \rfloor$ blocks (instead of at most $\lfloor \log_2(w) \rfloor+1$ blocks), giving a bound $K \le \lfloor \log_2(w) \rfloor+1$.
\end{proof}

\begin{example}\label{exa:infection}
We provide an example of how the above proofs work, by showing Melan\c{c}on's algorithm on a specific power in the context of Nyldon words. Consider the word $$w=01111011011111011110111,$$ with Nyldon conjugate $$n=1w1^{-1}=10111101101111101111011.$$ We show that in this case, $K=4$ (as shown in \cite{nyldon}), by factorizing $w^5$.

Note that $w^5=0111101101111101111011n^41$. The prefix $0111101101111101111011$ will be factorized and stay the same in front of $n$, as its factorization ends with a proper Nyldon suffix of $w$ which is smaller than $w$ under $<_{\text{lex}}$ by \cref{lem:nyldonsuffix}, so we focus only on factorizing $n^41$. We use semicolons to show the barriers between the $n$ factors. In each step, we contract the lexicographically smallest block(s).

\begin{enumerate}
    \item Start with $n^41=\\1,0,1,1,1,1,0,1,1,0,1,1,1,1,1,0,1,1,1,1,0,1,1;\\1,0,1,1,1,1,0,1,1,0,1,1,1,1,1,0,1,1,1,1,0,1,1;\\1,0,1,1,1,1,0,1,1,0,1,1,1,1,1,0,1,1,1,1,0,1,1;\\1,0,1,1,1,1,0,1,1,0,1,1,1,1,1,0,1,1,1,1,0,1,1;\\1$.
    \item Contract the $0$s to get \\
    $10, 1, 1, 1, 10, 1, 10, 1, 1, 1, 1, 10, 1, 1, 1, 10, 1, 1;\\
    10, 1, 1, 1, 10, 1, 10, 1, 1, 1, 1, 10, 1, 1, 1, 10, 1, 1;\\
    10, 1, 1, 1, 10, 1, 10, 1, 1, 1, 1, 10, 1, 1, 1, 10, 1, 1;\\
    10, 1, 1, 1, 10, 1, 10, 1, 1, 1, 1, 10, 1, 1, 1, 10, 1, 1;\\
    1$.
    \item Contract the $1$s to get \\
    $10111, 101, 101111, 10111, 1011; 10111, 101, 101111, 10111, 1011;\\ 10111, 101, 101111, 10111, 1011; 10111, 101, 101111, 10111, 10111$.\\
    The last of the four $n$-groups is different from the first three, so the infection starts. The number of blocks per (uninfected) $n$-group is $5$ (which is true right before the infection starts). Our state (condition, blocks left, merge counter) is (greater, $4$, $0$), where ``blocks left'' is $1$ less then the initial number of blocks in the left $n$-group. We look at the fourth $n$-group, where the infection is spreading, and the uninfected third $n$-group to its left.
    \item Contract the $101$s, to get \\
    $10111101, 101111, 10111, 1011; 10111101, 101111, 10111, 1011;\\ 10111101, 101111, 10111, 1011; 10111101, 101111, 10111, 10111$.\\ The number of blocks per (uninfected) $n$-group is $4$. This merge corresponds to Merge 1 in the enumeration, and our state (condition, blocks left, merge counter) changes to (greater, $3$, $1$).
    \item Contract the $1011$s to get \\
    $10111101, 101111, 101111011; 10111101, 101111, 101111011;\\ 10111101, 101111, 101111011; 10111101, 101111, 10111, 10111$.\\ The number of blocks per uninfected $n$-group is $3$. This merge spreads the infection and corresponds to Merge 7. Our state (condition, blocks left, merge counter) changes to (lesser, $2$, $2$).
    \item Contract the first $10111$ to get \\
     $10111101, 101111, 101111011; 10111101, 101111, 101111011;\\ 10111101, 101111, 101111011; 10111101, 10111110111, 10111$. \\This merge corresponds to Merge 8 and changes our state (condition, blocks left, merge counter) is (greater, $1$, $2$).
     \item Contract the second $10111$ to get \\
     $10111101, 101111, 101111011; 10111101, 101111, 101111011;\\ 10111101, 101111, 101111011; 10111101, 1011111011110111$.\\ This merge corresponds to Merge 4 and has no effect on the state.
     \item Contract the $101111$s to get \\
     $10111101101111, 101111011; 10111101101111, 101111011;\\ 10111101101111, 101111011; 10111101, 1011111011110111$.\\ The number of blocks per uninfected $n$-group is $2$. This merge corresponds to Merge 7 and our state (condition, blocks left, merge counter) changes to (lesser, $0$, $3$).
     \item Contract the $10111101$ to get \\
     $10111101101111, 101111011; 10111101101111, 101111011;\\ 10111101101111, 10111101110111101, 1011111011110111$.\\ The infection has spread to the next $n$-group. Note that the number of blocks per uninfected $n$-group decreased from $5$ to $2$ over the course of infection of the previous $n$-group (since ``merge counter'' was $3$ at the end), and $2 \le 5/2 $, demonstrating the bound proved in the central claim.
     \item Contract the $101111011$s to get\\ 
     $10111101101111101111011; 10111101101111101111011; \\ 10111101101111, 10111101110111101, 1011111011110111$.\\ We now have $n$ fully formed! The next step will create \\$10111101101111101111011;\\ 1011110110111110111101110111101101111, 10111101110111101, 1011111011110111$,\\ but the leftmost $n$ factor will remain unaltered.
\end{enumerate}

As only one $n$ appears in the Nyldon factorization of $w^5$ in this example, we have $K=4$.
\end{example}

\begin{definition}\label{def:code}
A subset $F$ of $A^*$ is a \textit{code} if all possible concatenations of (not necessarily distinct) words in $F$ yield distinct words. The subset $F$ is a \textit{circular code} if for any words $u$ and $v$, we have $uv, vu \in F^*$ implies $u, v \in F^*$, where $F^*$ is the set of all possible concatenations of words in $F$.
\end{definition}

In other words, a code is circular if whenever we take a concatenation of words in the code and put the string in a circle, we can recover the original sequence of words. For example, $\{00, 01, 10\}$ is not a circular code, as $(00, 10)$ forms the same circular word as $(01, 00)$. It turns out that Lyndon words over a given alphabet of a fixed length form a circular code (for example, see Exercise 8.1.5 of \cite{codebook}). We prove a similar result for Nyldon words.

For the rest of this section, we fix a right Hall set $(H, \prec)$. We refer to a word in $H$ as an $H$-word.

\begin{lemma}\label{lem:crossingboundary}
Consider a sequence of $H$-words $w_1, w_2, \dots, w_k$. Suppose we perform Melan\c{c}on's algorithm on the circular word $w_1w_2\cdots w_k$ (where blocks are initially individual letters). Then at least one block $w_j$ will form before any block crosses a boundary between two words $w_i, w_{i+1}$ (where $w_{k+1}=w_1$).
\end{lemma}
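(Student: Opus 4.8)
The plan is to run the conjugate (circular) version of Melan\c{c}on's algorithm on $W=w_1w_2\cdots w_k$ and argue by contradiction about the \emph{first} boundary crossing. Suppose some contraction produces a block straddling a boundary before any of the words $w_j$ has been assembled into a single block. If some $w_j$ has length $1$ it is already a block at the start, so we may assume every $w_j$ has length at least $2$. Consider the precise contraction step at which the first straddling block is created. Just before this step every current block lies inside a single $w_m$, so each word is partitioned as $w_m=b_{m,1}b_{m,2}\cdots b_{m,t_m}$ into $H$-word blocks, and by our assumption every $t_m\ge 2$. Since each contraction absorbs a block that is $\prec$-minimal among the current blocks into a strictly larger left neighbour, and since a straddling block can only be created by absorbing the \emph{first} block of some $w_m$ into the last block of $w_{m-1}$, the absorbed block must be $b_{m,1}$ and it must be $\prec$-minimal among all current blocks; in particular $b_{m,1}\preceq b_{m,j}$ for every $j$.

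The heart of the argument is then to show that this is impossible when $t_m\ge 2$. The blocks $b_{m,1},\dots,b_{m,t_m}$ arise from a sequence of contractions carried out entirely inside $w_m$ (no boundary having yet been crossed), each absorbing a then-minimal block leftward into a strictly larger block; hence this block decomposition is a legitimate intermediate state of Melan\c{c}on's algorithm applied to $w_m$ on its own. I would now invoke the correctness of the factorization version of Melan\c{c}on's algorithm for right Hall sets \cite{melancon}: continued from any such intermediate state it outputs the unique $\preceq$-nondecreasing factorization of $w_m$ into $H$-words. Because $w_m\in H$, that factorization is simply $(w_m)$. On the other hand, the factorization version peels off the leading block as a completed factor precisely when that block is $\prec$-minimal among the remaining blocks; since $b_{m,1}$ is minimal, it would be peeled off as a first factor equal to $b_{m,1}\neq w_m$, forcing a factorization of $w_m$ with at least two factors. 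This contradicts the defining unique-factorization property of $(H,\prec)$, so in fact $t_m=1$.

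Thus $w_m$ is already a single block at the moment just before the first straddling block forms, which is exactly the statement that some $w_j$ (namely $w_m$) is assembled before any block crosses a boundary. The step I expect to require the most care is the middle one: justifying that the partial block decomposition of $w_m$ produced by the circular run is a valid intermediate configuration from which Melan\c{c}on's factorization procedure still returns the canonical factorization. This rests on the confluence of the contraction operation (the outcome is independent of the order in which minimal blocks are absorbed, as noted after \cref{thm:primitive2}), together with the observation that, because $b_{m,1}$ is the global minimum, every contraction the circular algorithm has performed inside $w_m$ is also a contraction of a block that is locally minimal within $w_m$, and hence a bona fide step of the single-word algorithm. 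Once this bookkeeping is in place, the rest is immediate.
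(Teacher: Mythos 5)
Your proof is correct and follows essentially the same route as the paper: argue by contradiction at the first boundary crossing, observe that the crossing block must be the first block $b_{m,1}$ of some $w_m$ and is globally (hence locally) $\prec$-minimal, and transplant the intermediate block decomposition of $w_m$ into a standalone run of Melan\c{c}on's algorithm on $w_m$ alone. The only difference is the final contradiction: the paper continues the \emph{conjugate} (circular) version on $w_m$, so that the minimal first block would eventually wrap around and the algorithm's output would not be the unique $H$-word conjugate $w_m$, whereas you continue the \emph{factorization} version, peeling off $b_{m,1}$ to produce a second nondecreasing $H$-factorization of $w_m$; both variants rest equally on the cited correctness of Melan\c{c}on's algorithm for right Hall sets and on the confluence of the contraction operation.
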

\begin{proof}
For sake of contradiction, suppose that a block crosses the boundary between $w_{i-1}, w_i$ without any word $w_j$ fully forming. Right before this happens, because by assumption no word $w_j$ is fully formed, let the first block in each $w_j$ be $p_j$ and the last block be $s_j$.

We have that $p_{i}$ is being joined to $s_{i-1}$, so $p_{i}$ is the smallest of all the blocks still remaining under $\prec$. In particular, $p_{i}$ is the smallest of the blocks still remaining under $\prec$ that make up $w_{i}$, of which there are at least two. But then, if we were to apply the conjugate version of Melan\c{c}on's algorithm to (the circular word) $w_i$, at some point the block $p_i$ would contract to the left. Then the final $H$-word conjugate remaining would not be $w_i$, which is a contradiction. Therefore, blocks will never cross the original $H$-word boundaries until strictly after at least one is formed.
\end{proof}

\begin{lemma}\label{lem:lengthlblock}
Consider a sequence of $H$-words $w_1, w_2, \dots, w_k$, each of length $\ell$. Suppose we perform Melan\c{c}on's algorithm on the circular word $w_1w_2\cdots w_k$ (where blocks are initially individual letters). Then the first block to form of length at least $\ell$ is some $w_j$.
\end{lemma}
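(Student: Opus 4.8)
The plan is to combine \cref{lem:crossingboundary} with the simple observation that, as long as no block has crossed a word boundary, every block is confined to a single $w_i$ and hence has length at most $\ell$. The whole argument hinges on controlling \emph{when} a block of length at least $\ell$ can first appear relative to \emph{when} the first boundary crossing occurs.

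First I would make precise the notion of a block ``crossing a boundary.'' Since the $w_i$ are concatenated to form the circular word, there are $k$ boundaries (between consecutive $w_i$, read cyclically), and a block crosses a boundary once it contains letters lying on both sides of that boundary. Initially each block is a single letter, so no boundary is crossed, and crossings can only be created by contractions. The key structural remark is then: before the first boundary crossing, every block is entirely contained in a single $w_i$, and since $|w_i|=\ell$ such a block has length at most $\ell$, with length exactly $\ell$ precisely when the block is all of $w_i$, i.e.\ equals some $w_j$. In particular, any block of length strictly greater than $\ell$ must span two consecutive words and so must already have crossed a boundary.

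Next I would invoke \cref{lem:crossingboundary}, which guarantees that some $w_j$ forms as a full block strictly before any block crosses a boundary. Fix the first time $t_1$ at which a boundary is crossed (with $t_1=\infty$ if none ever is). By \cref{lem:crossingboundary} a full $w_j$, which is a block of length $\ell$, has already appeared at some time before $t_1$. Hence a block of length at least $\ell$ exists before $t_1$, so the \emph{first} block of length at least $\ell$ must itself appear before $t_1$. By the structural remark, any block formed before $t_1$ lies inside a single $w_i$ and so has length at most $\ell$; combined with its length being at least $\ell$, it has length exactly $\ell$ and therefore equals some $w_j$, which is the desired conclusion.

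I do not expect a serious obstacle here: once \cref{lem:crossingboundary} is in hand, the argument is essentially a timeline comparison between the first appearance of a full word and the first boundary crossing. The only point requiring genuine care is the structural remark, specifically that a block of length strictly greater than $\ell$ cannot sit inside a single word of length $\ell$ and hence must cross a boundary. This is what rules out the possibility that the first block of length at least $\ell$ is a ``long'' block straddling two words rather than exactly some $w_j$.
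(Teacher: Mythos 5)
Your proof is correct and follows essentially the same route as the paper: the observation that any block of length at least $\ell$ other than a full $w_j$ must cross a word boundary, combined with \cref{lem:crossingboundary} to guarantee some $w_j$ forms before any boundary is crossed. Your version merely makes the timeline comparison (first crossing time versus first long block) more explicit than the paper's two-sentence argument.
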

\begin{proof}
Any block of length at least $\ell$ that is not one of the $w_j$'s must cross a boundary between some $w_i, w_{i+1}$. By \cref{lem:crossingboundary}, no such block can form until some $w_j$ forms. 
\end{proof}

\begin{theorem}\label{thm:circularcodegeneralhall}
The $H$-words of any fixed length $\ell$ form a circular code.
\end{theorem}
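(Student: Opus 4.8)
The plan is to verify separately the two requirements of \cref{def:circularcode} for $F := H \cap A^\ell$, the set of length-$\ell$ $H$-words: that $F$ is a code, and that it is circular. The code property is immediate and uniform: if two products of elements of $F$ are equal as words, they have equal length, hence the same number of factors, and cutting the common word into consecutive blocks of length $\ell$ recovers both factor sequences, so they agree. Thus $F^*$ is exactly the set of words whose length is a multiple of $\ell$ and all of whose consecutive length-$\ell$ blocks are $H$-words.

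For circularity, suppose $uv,vu\in F^*$; I want to conclude $u,v\in F^*$. Let $N=|uv|=|vu|$, a multiple of $\ell$. Regard $uv$ and $vu$ as two linear readings of a single circular word $C$ (they are conjugate, $vu$ being the rotation of $uv$ by $|u|$ letters). The factorization $uv=w_1\cdots w_m$ with $w_i\in F$ cuts $C$ into length-$\ell$ $H$-blocks with boundaries at offset $0$ (positions $0,\ell,2\ell,\dots$), while $vu=w_1'\cdots w_m'$ cuts $C$ into length-$\ell$ $H$-blocks with boundaries at offset $b:=|u|\bmod \ell$. Everything reduces to showing $b=0$: then the position $|u|$ is a boundary of the offset-$0$ tiling, so $u$ is the product of an initial run of the $w_i$ and $v$ the product of the remaining ones, giving $u,v\in F^*$.

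To prove $b=0$ I would run the conjugate version of Melan\c{c}on's algorithm on $C$ (starting from single letters) and track the first block of length at least $\ell$ that forms. By \cref{lem:lengthlblock} applied to the tiling $w_1,\dots,w_m$, this block is one of the $w_j$, hence an arc of length exactly $\ell$ occupying $[(j-1)\ell,\,j\ell)$; applying \cref{lem:lengthlblock} to the tiling $w_1',\dots,w_m'$ for the \emph{same} run, it is equally one of the $w_i'$, occupying $[\,b+(i-1)\ell,\ b+i\ell\,)$. Since these describe the same arc of length $\ell$ in $C$, their left endpoints agree modulo $N$, forcing $b\equiv (j-i)\ell\equiv 0\pmod{\ell}$, i.e.\ $b=0$. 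The point that makes this work is that \cref{lem:lengthlblock} is a statement about one run of the algorithm on $C$, into which each tiling enters only through the conclusion; so the first long block is simultaneously a tile of both tilings.

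The main obstacle is guaranteeing that a block of length at least $\ell$ actually forms, i.e.\ ruling out the degenerate case where $C$ is periodic and Melan\c{c}on's algorithm terminates in identical blocks all shorter than $\ell$; I expect this to be the real content. I would handle it by showing that a length-$\ell$ $H$-tiling of $C$ forces the minimal period $p_0$ of $C$ to satisfy $p_0\ge \ell$. If $p_0<\ell$, a tile is a length-$\ell$ window of a word of period $p_0$: when $p_0\mid \ell$ such a window is imprimitive and so cannot be an $H$-word, while when $p_0\nmid \ell$ one splices a tile into a power $n^k$ of the $H$-word conjugate $n$ of the primitive root to produce a second $H$-factorization of $n^k$ distinct from $(n,\dots,n)$, contradicting unique factorization (this is why no right Hall set can contain both $010$ and $101$: it would force $1010$ to factor as both $(10,10)$ and $(1,010)$). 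Once $p_0\ge\ell$ is known, the terminal Melan\c{c}on blocks have length $p_0\ge\ell$, so a length-$\ge\ell$ block does form along the run, and the argument of the previous paragraph closes the proof.
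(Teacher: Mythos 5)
Your main argument is the same as the paper's: run Melan\c{c}on's algorithm on the circular word and use \cref{lem:crossingboundary} and \cref{lem:lengthlblock} to pin down the first block of length at least $\ell$. The paper says this block is some $w_i$, which would cross a $v$-boundary if the offsets disagree, contradicting \cref{lem:crossingboundary}; you instead apply \cref{lem:lengthlblock} to both tilings of the same run and conclude the block is a tile of each, forcing the offsets to agree modulo $\ell$. These are the same argument. You go beyond the paper in two ways: you spell out the (trivial) code property, and, more substantively, you notice that the whole argument is vacuous if no block of length at least $\ell$ ever forms, i.e.\ if the circular word $C$ is periodic with primitive period $p_0<\ell$, in which case the algorithm terminates in identical blocks of length $p_0$. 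The paper's proof silently assumes this cannot happen, so flagging it is a genuine catch, and it does require an argument.

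However, your treatment of the subcase $p_0\nmid\ell$ has a real gap. Writing $n^k=a\,w_1\,b$ and cutting $a$ and $b$ into $H$-words does not contradict unique factorization: a right Hall set only guarantees uniqueness of the \emph{nondecreasing} factorization $h_1\preceq h_2\preceq\cdots\preceq h_m$, and your spliced sequence of factors has no reason to be nondecreasing under $\prec$. (Your parenthetical example suffers the same flaw: it assumes $10\in H$ and that both $(10,10)$ and $(1,010)$ are $\prec$-nondecreasing, neither of which is given.) The degenerate case can instead be closed with the paper's own tools. If $p_0<\ell$, then every block ever formed lies inside one of the terminal blocks and hence has length at most $p_0<\ell$, so no tile $w_j$ ever forms; by the contrapositive of \cref{lem:crossingboundary}, no block ever crosses a tile boundary. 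At termination the blocks partition the circle into identical words $n$ of length $p_0$, none crossing a tile boundary, so every tile boundary is a block boundary; this forces $p_0\mid\ell$ and $w_i=n^{\ell/p_0}$ with $\ell/p_0\ge 2$. Since the blocks produced by contraction are themselves $H$-words (this is \cref{lem:contraction} for Nyldon-like sets, and part of Melan\c{c}on's correctness for general right Hall sets), $(n,n,\dots,n)$ and $(w_i)$ are two distinct nondecreasing $H$-factorizations of $w_i$, a contradiction. In particular the subcase $p_0\nmid\ell$ never occurs at all, so the splicing argument (and even the primitivity of Hall words, which the paper proves only for Nyldon-like sets) is not needed.
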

\begin{proof}
For sake of contradiction, suppose we have a sequence of $H$-words $w_1, w_2, \dots, w_k$, each of length $\ell$, such that circularly shifting $w_1w_2\cdots w_k$ by a non-multiple of $\ell$ gives $v_1v_2\cdots v_k$, where the $v_i$ are all $H$-words of length $\ell$. Suppose we perform Melan\c{c}on's algorithm on the circular word $w_1w_2\cdots w_k$, which is the same as the circular word $v_1v_2\cdots v_k$. By \cref{lem:lengthlblock}, the first block of length at least $\ell$ formed is still some $w_i$. This means that a boundary is crossed in the $v_i$'s before any block $v_i$ is formed, which is impossible by \cref{lem:crossingboundary}. Therefore, there is no possible other sequence $v_1v_2\cdots v_k$ that forms the same circular word as $w_1w_2\cdots w_k$.
\end{proof}

\section{Another Algorithm}
In \cite{nyldon}, Charlier et al.\ provide an algorithm for computing the Nyldon factorization of a word. We prove that this algorithm is linear in the length of the word. Thus the Nyldon factorization can be computed in linear time, just like for the Lyndon factorization, as shown by Duval \cite{duval}. We reproduce the algorithm here. \\

\begin{algorithmic}
\Require $w\in A^+$
\Ensure NylF is the Nyldon factorization of $w$
\State  $n\leftarrow \text{length}(w)$, NylF $\leftarrow (w[n])$
\For{$i=1$ to $n-1$}
\State $\text{NylF} \leftarrow (w[n-i],\text{NylF})$
\While{$\text{length}(\text{NylF}) \ge 2  \text{ and } \text{NylF}(1)>_{\text{lex}}\text{NylF}(2)$}
\State $\text{NylF}\leftarrow (\text{NylF}(1)\cdot \text{NylF}(2),\,\text{NylF}(3),\, \ldots,\,\text{NylF}(-1))$
\EndWhile
\EndFor

\noindent\Return NylF
\end{algorithmic}

\begin{center} Algorithm 2 \cite{nyldon}: Computing the Nyldon factorization \end{center}

The number of initial lexicographic comparisons is equal to the number of digits in $w$, and the number of additional lexicographic comparisons is at most the number of times two words combine into a bigger word. There are $|w|-1$ barriers between words, so the total number of lexicographical comparisons is at most $2|w|-1$, which is linear in $w$. One way to do comparisons fast is with a suffix array, which takes $O(|w|)$ time to construct, giving a least common prefix (LCP) array \cite{suffix, lcp}. Comparing substrings of $w$ is equivalent to a range minimum query (RMQ) on the LCP array between the indices representing the suffixes where the substrings start. We can do RMQ in constant time with linear preprocessing time \cite{fischerrmq}. Therefore, we obtain a runtime of $O(|w|)$.

\begin{remark}
This algorithm also finds the factorization of a word with respect to any right Hall set $(H, \prec)$, if we replace the comparison $>_\text{lex}$ with $\succ$ in the \textbf{while} loop. The proof is the same as that of Proposition 18 in \cite{nyldon}, as Theorem 14 and Proposition 17 in \cite{nyldon} hold for all right Hall sets. For example, the general form of Proposition 17 is equivalent to Lemma 1.8 of \cite{viennot}, and implies the general form of Theorem 14.
\end{remark}

We can also use Melan\c{c}on's algorithm to factorize words. If we store the current blocks in a heap, and simply contract the current minimum each time, we get a time complexity of the previous algorithm increased by a logarithmic factor. So, Melan\c{c}on's algorithm for factorization or finding the Nyldon conjugate has a runtime of $O(|w|\log|w|)$.

\section{The Lazard Procedure}
Let $A^{\le n}$ be the set of words on $A$ with length at most $n$. Also, for a set of words $X$ and word $w$, let $Xw^*=X \cup Xw \cup Xww \cup \cdots$, where $Xw^i$ denotes the set of words $\{xw^i : x \in X\}$ for all $i \ge 0$.

\begin{definition}\label{def:lazard}
A right Lazard set is a subset $F$ of $A^+$ with a total order $<$ satisfying the following property for each positive integer $n$: suppose $F \cap A^{\le n}=\{ u_1, u_2, \dots, u_k\}$ with $u_1 < u_2 < \cdots  < u_k$. Let $Y_i$ be a sequence of sets defined as $Y_1=A$ and $Y_i=(Y_{i-1} \backslash u_{i-1})u_{i-1}^*$ for $i \ge 2$. Then, for all $1 \le i \le k$, we have that $u_i \in Y_i$, and $Y_k \cap A^{\le n}=\{u_k\}$.
\end{definition}

The \textit{Lazard procedure of length} $n$ is the act of generating the $Y_i$ by choosing $u_i$ in $Y_i$, removing it from $Y_i$, and creating $Y_{i+1}$, all for a given value of $n$. We can think of the total order on $F \cap A^{\le n}$ (and by extension, on $F$) as being induced by the procedure itself by the choice of $u_i$.

Lazard sets are discussed in \cite{viennot}, and are proved to be equivalent to Hall sets. In \cite{nyldon}, Charlier et al.\ prove that the Nyldon words form a right Lazard set, and conjecture that the right Lazard procedure generates all the Nyldon words up to a certain length much before the procedure ends, unlike the procedure for Lyndon words (Open Problem 59). We explicitly determine the number of steps it takes to generate all the Nyldon words up to a given length. For reference, we reproduce the example in \cite{nyldon} of the right Lazard procedure on binary Nyldon words of length at most $5$. Note that all words have been generated by the fourth step.

\begin{table}
\centering
	\begin{tabular}{c || l | l}
	$i$ 	& $Y_i \cap \{{ 0,1}\}^{\le 5}$											& $u_i$ \\
	\hline 
	1 	& \{{0, 1}\} 																& {0} \\
	2 	& \{{1, 10, 100, 1000, 10000}\} 											& {1} \\
	3 	& \{{10, 101, 1011, 10111, 100, 1001, 10011, 1000, 10001, 10000}\}  				& {10} \\
	4 	& \{{101, 10110, 1011, 10111, 100, 10010, 1001, 10011, 1000, 10001, 10000}\}  	& {100}\\
	5 	& \{{101, 10110, 1011, 10111, 10010, 1001, 10011, 1000, 10001, 10000}\} 		& {1000}\\
	6 	& \{{101, 10110, 1011, 10111, 10010, 1001, 10011, 10001, 10000}\} 				& {10000}\\
	7 	& \{{101, 10110, 1011, 10111, 10010, 1001, 10011, 10001}\}  					& {10001}\\
	8 	& \{{101, 10110, 1011, 10111, 10010, 1001, 10011}\} 							& {1001}\\
	9 	& \{{101, 10110, 1011, 10111, 10010, 10011}\} 								& {10010} \\
	10 	& \{{101, 10110, 1011, 10111, 10011}\}  										& {10011}\\
	11 	& \{{101, 10110, 1011, 10111}\}  											& {101} \\
	12 	& \{{10110, 1011, 10111}\} 						 						& {1011} \\
	13 	& \{{10110, 10111}\} 				 									& {10110}\\
	14 	& \{{10111}\} 															& {10111}\\
	\end{tabular}
\caption{An example of the Lazard procedure with $n=5$ \cite{nyldon}.}
\end{table}

\begin{lemma}\label{lem:lazardunique}
Every Nyldon word $w$ can be generated in exactly one way by the Lazard procedure of length $n$ for $n \ge |w|$, which is the same for all such $n$.
\end{lemma}
\begin{proof}
Each word in some $Y_i$ is generated by adding Nyldon words to the right of an initial letter in $Y_1$, with words added in nondecreasing lexicographic order, by \cref{def:lazard}. Then since the Nyldon words form a right Lazard set, each Nyldon word $w$ arises as a word in some $Y_i$ (assuming we perform the procedure for $n \ge |w|$). Thus we can write every Nyldon word $w$ that is not a single letter as $au_{i_1}u_{i_2}\cdots u_{i_k}$ where $a \in A$ and $i_1 \le i_2 \le \cdots  \le i_k$ as it is generated by the procedure, so $u_{i_1} \preceq u_{i_2} \preceq \cdots \preceq u_{i_k}$ are Nyldon words. However, $u_{i_1}u_{i_2}\cdots u_{i_k}$ is then the unique Nyldon factorization of $a^{-1}w$, so there is only one way to generate $w$ during the Lazard procedure (which is the same for all $n$). The word $w$ is indeed kept when intersecting with $A^{\le n}$ for $n \ge |w|$, and all the words $u_{i_j}$ have length at most $n$, so $w$ is indeed generated in the Lazard procedure of length $n$ for $n \ge |w|$.
\end{proof}

Now, we determine the step at which all Nyldon words have been generated. For the rest of this section, we fix an alphabet $A$ of size $m+1$, so $A=\{0,1,\dots ,m\}$.

\begin{lemma}\label{lem:largestnyldonword}
The largest Nyldon word lexicographically of length at most $\ell$ is $m(m-1)m^{\ell-2}$.
\end{lemma}
\begin{proof}
No Nyldon word $w$ can start with $mm$, as adding $m$ to the Nyldon factorization of $m^{-1}w$ yields a valid Nyldon factorization with at least $2$ factors. So, $m(m-1)m^{\ell-2}$ is lexicographically the largest possible such word remaining. This word is clearly Nyldon by any factorization algorithm, so we are done.
\end{proof}

\begin{proposition}\label{prop:firstblock}
For a positive integer $\ell>1$ and (possibly empty) word $v$, when we perform Melan\c{c}on's algorithm on $w=m(m-1)m^{\ell}v$, a block of length $\ell+1$ will form at the beginning of $w$ at some step of the algorithm.
\end{proposition}
\begin{proof}
Write $w=m(m-1)m^{\ell-1}(mv)$. We claim that the Nyldon prefix $u=m(m-1)m^{\ell-1}$ will form a block at some step. For sake of contradiction, suppose that $u$ does not form a block at any step. Since $u$ is Nyldon, it would naturally form a block on its own under Melan\c{c}on's algorithm: first the letter $m-1$ is contracted, then each of the $\ell-1$ copies of $m$ is contracted. The only way that $u$ does not form a block is if it is interrupted by $mv$: specifically, the leftmost block of $mv$ contracts to the left onto the rightmost block of $u$. But until $u$ forms a block, its rightmost block is always the letter $m$, which is at most as big lexicographically as the leftmost block of $mv$. Thus we have a contradiction and we are done.
\end{proof}

\begin{lemma}\label{lem:repunitnyldonword}
Let $\ell$ be a nonnegative integer. If a word $v$ has length at most $\ell$, then $w=m(m-1)m^{\ell+1}v$ is Nyldon.
\end{lemma}
\begin{proof}
If we perform Melan\c{c}on's algorithm on $w$, then a block of length $\ell+2$ will form at the beginning of the word by \cref{prop:firstblock}. This block is lexicographically greater than any block to the right, since any such block has length less then $\ell+2$ (as the whole word has length at most $2\ell+3$), and our first block is the lexicographically largest Nyldon word of length at most $\ell+2$ by \cref{lem:largestnyldonword}. So, the whole word is Nyldon.
\end{proof}

\begin{proposition}\label{prop:lazardstopword}
Let $\ell \ge 5$ be a positive integer, and suppose that the step at which the Lazard procedure of length $\ell$ generates all the Nyldon words of length up to $\ell$ is the step corresponding to (the removal of) the Nyldon word $u_i$ for some $i$. If $\ell$ is odd, then $u_i=m(m-1)m^{\frac{\ell-5}{2}}$, and if $\ell$ is even, then $u_i=m(m-1)m^{\frac{\ell-6}{2}}(m-1)$.
\end{proposition}

\begin{proof}
The main idea is that the chosen $u_i$ is the lexicographically largest Nyldon word that can still be affixed to the end of a larger Nyldon word within the length limits. Suppose that by the time $u_i$'s step happens, there is still some Nyldon word that has not yet appeared. Then, it must equal $ab$ where $a, b$ are Nyldon, and $a >_{\text{lex}} b >_{\text{lex}} u_i$.

If $\ell$ is odd, then since $a, b$ are greater than $u_i$ lexicographically, they each must have a length at least $\frac{\ell+1}{2}$ since $u_i$ is the largest Nyldon word lexicographically with a length of at most $\frac{\ell-1}{2}$ by \cref{lem:largestnyldonword}. Then $|ab| \ge \ell+1$, which is a contradiction.

If $\ell$ is even, then $a$ and $b$, being greater than $u_i$ lexicographically, must each have length at least $\frac{\ell}{2}$. Since $|ab|$ must be at most $\ell$, the only possibility is then $a=b=u_i(m-1)^{-1}m$, which is impossible because $a$ and $b$ cannot be equal. So, we have proved that by step $u_i$, every Nyldon word is generated.

Furthermore, at step $u_i$, if $\ell$ is odd, then $(u_im)u_i$ is generated, and if $\ell$ is even, then $(u_i(m-1)^{-1}m)u_i$ is generated, so a new word is generated. So, we are done.
\end{proof}

Now, we want to calculate how far from the end the Lazard procedure has generated all the Nyldon words, or equivalently, calculate how many Nyldon words of length at most $\ell$ are lexicographically greater than the $u_i$ in \cref{prop:lazardstopword}. We examine two cases, based on whether $\ell$ is even or odd.

\begin{proposition}\label{prop:lazardstopnumberodd}
Suppose $\ell=2n+1$ for $n \ge 7$, and let $u=m(m-1)m\cdots m$, where $u$ has length $n$, so there are $n-2$ copies of $m$ at the end. Then, the number of Nyldon words lexicographically greater than $u$ and with length at most $\ell$ is 
$\displaystyle \frac{(m+1)^{n+2}-(m+1)}{(m+1)-1}\ - ((m+1)^3+(m+1)^2+2(m+1)+2)$.
\end{proposition}

\begin{proof}
Suppose $\ell=2n+1$ for $n \ge 7$. Most words of length at most $\ell$ beginning with $u$ are Nyldon, so we will instead count the words which are not. Let $w$ be a word beginning with $u$ and of length at most $\ell$ such that $w$ is not Nyldon. We split into cases based on the $(n+1)$th letter of $w$, where each time we perform Melan\c{c}on's algorithm. Note that $|w|>|u|$ since $w$ is not Nyldon.\\

\noindent\textbf{Case 1}: Suppose $w$ begins with $um$. Then, a block $u$ will form at the beginning of the word by \cref{prop:firstblock}. For $w$ to not be Nyldon, the Nyldon block to the right of $u$ must be lexicographically at least $u$ at some step of the algorithm. Otherwise, it would contract into the first block, and any remaining block would have a length at most $n$ and therefore be lexicographically smaller than the leftmost block by \cref{lem:largestnyldonword}, and so would eventually contract to the left.

By \cref{lem:largestnyldonword}, this block must have length $n$ or $n+1$, and in particular must either be $u$ or $ud$ for a letter $d$. There are $|A|+1=(m+1)+1$ possibilities in this case, depending on whether $w$ has length $2n+1$ or $2n$, respectively.\\

\noindent\textbf{Case 2}: Suppose $w$ begins with $u(m-1)$. Then, a block of $um^{-1}$ will form at the beginning of the word by \cref{prop:firstblock}, which is the largest Nyldon word lexicographically of length at most $n-1$ by \cref{lem:largestnyldonword}. To the right of that, a block starting with $m(m-1)$, and therefore of length at least $2$, will form. We claim that this ``middle'' block cannot combine with the block to its left. Suppose it does. Then, the leftmost block will be of length at least $n+1$ and start with $u$, and so no Nyldon block to the right of it can be lexicographically greater than it, so the whole word $w$ would be Nyldon.

So at some step of the algorithm, the ``middle'' block, i.e. the one starting at the $n$th index, must be lexicographically at least $um^{-1}$. Therefore, the word made up from indices $n$ through $(2n-2)$ must be another copy of $um^{-1}$ (using the lexicographic maximality of $um^{-1}$). Then $w=(um^{-1})(um^{-1})v$ for some word $v$ of length $3$. By \cref{lem:repunitnyldonword}, the word $(um^{-1})v$ is Nyldon if $um^{-1}$ ends with at least $4$ copies of the letter $m$, or $n \ge 7$; in this case $(um^{-1}, um^{-1}v)$ is the Nyldon factorization of $w$. Since any $v$ of length at most $3$ works, we get $1+(m+1)+(m+1)^2+(m+1)^3$ possible words that are not Nyldon, summing over $v$ of length $0, 1, 2, $ and $3$ (as there are $(m+1)^i$ possibilities for $v$ of length $i$).\\

\noindent\textbf{Case 3}: Suppose $w$ begins with $ud$ for a letter $d \le m-2$. When performing Melan\c{c}on's algorithm, a block of $um^{-1}$ will form at the beginning of the word by \cref{prop:firstblock}, and will be lexicographically greater than the block to its right. When they combine, the block will start with $u$, and be greater than any block to the right. So, the whole word would be Nyldon. Thus there are $0$ possible words in this case. \\

Therefore, the total number of words lexicographically greater than $u$ with length at most $\ell=2n+1$ which are not Nyldon, for $n \ge 7$, is $((m+1)+1)+((m+1)^3+(m+1)^2+(m+1)+1)=(m+1)^3+(m+1)^2+2(m+1)+2$, obtained by summing over all cases. There are $(m+1)^k$ words of length $n+k$ starting with $u$ for $1 \le k \le n+1$, and all are Nyldon except the aforementioned words. So, using the fact that $(m+1)+(m+1)^2+\cdots +(m+1)^{n+1}=\frac{(m+1)^{n+2}-(m+1)}{(m+1)-1}$, we have the result.
\end{proof}

\begin{proposition}\label{prop:lazardstopnumbereven}
Suppose $\ell=2n$ for $n \ge 9$, and let $u=m(m-1)m\cdots m(m-1)$, where $u$ has length $n$, so there are $n-3$ copies of $m$ in the middle. Then, the number of Nyldon words lexicographically greater than $u$ and with length at most $\ell$ is $\displaystyle \frac{(m+1)^n-(m+1)}{(m+1)-1}\ - ((m+1)^4+(m+1)^3+(m+1)^2+(m+1)+3)$.
\end{proposition}
\begin{proof}
As before, we will count the words beginning with $u$ of length at most $\ell$ that are not Nyldon. Let $w$ be such a word. We do casework based on the first Nyldon factor $u$ of $w$. Note that by \cref{prop:firstblock}, performing Melan\c{c}on's algorithm on $w$ yields a block of length $n-2$ at some step, so $|u| \ge n-2$.\\

\noindent\textbf{Case 1}: Suppose $|u|=n-2$. The next Nyldon factor is at least as big as $u$ lexicographically, so starting at index $n-1$, we must have the word $u(m(m-1))^{-1}$. By \cref{lem:repunitnyldonword}, the last $0$, $1$, $2$, $3$, or $4$ digits can be anything as long as $u(m(m-1))^{-1}$ ends with at least $5$ copies of $m$, or $n \ge 9$. So, we get $(m+1)^4+(m+1)^3+(m+1)^2+(m+1)+1$ possible words. \\

\noindent\textbf{Case 2}: Suppose $|u|=n-1$. Then the next Nyldon factor starts with the letter $(m-1)$, which is impossible since it would be lexicographically smaller than $u$, giving $0$ possible words. \\

\noindent\textbf{Case 3}: Suppose $|u|=n$. Then, the rest of $w$ must be either $u$ or $u(m-1)^{-1}m$, since it starts with (and therefore is) a Nyldon word lexicographically at least $u$. So, we get $2$ possible words.\\

\noindent\textbf{Case 4}: Suppose $|u|>n$. Then, the rest of $w$ has length at most $n-1$ and is therefore lexicographically smaller than $u$, which is a contradiction, giving $0$ possible words. \\

Summing over the cases yields a total number of non-Nyldon words equal to $((m+1)^4+(m+1)^3+(m+1)^2+(m+1)+1)+(2)=((m+1)^4+(m+1)^3+(m+1)^2+(m+1)+3)$. There are $(m+1)^k$ words of length $n+k$ starting with $u_i$ for $1 \le k \le n$, and all are Nyldon except the aforementioned words. So, using the fact that $(m+1)+(m+1)^2+\cdots +(m+1)^n=\frac{(m+1)^n-(m+1)}{(m+1)-1}$, we have the result.
\end{proof}
In the Lazard procedure of length $\ell$, every Nyldon word of length at most $\ell$ appears as $u_i$ for some $i$. Then we can simply subtract the number of steps from the end that all the words are generated from the total number of Nyldon words to find the step at which the Lazard procedure of length $\ell$ generates all Nyldon words of length $\ell$.
\begin{definition}
For a Lazard procedure of length $n$ on a right Lazard set $F$ with $|F \cap A^{\le n}|=k$, we define the finishing time to be the smallest $j$ such that $$\bigcup_{i=1}^j \left(Y_i \cap A^{\le n}\right)=\bigcup_{i=1}^k \left(Y_i \cap A^{\le n}\right).$$
\end{definition}

For example, the Lazard procedure of length $5$ on binary Nyldon words (shown in Table 2) has finishing time $4$. Using \cref{prop:lazardstopnumberodd} and \cref{prop:lazardstopnumbereven}, we have the following result:
\begin{corollary}\label{cor:finishingtimes}
The finishing time for the Lazard procedure of length $\ell$ on Nyldon words over an alphabet of size $(m+1)$ is 
$$(\emph{number of Nyldon words up to length }\ell)+1-f(\ell),$$
where $f(\ell)$ is a function equal to $$ \frac{(m+1)^{n+2}-(m+1)}{(m+1)-1}\ - ((m+1)^3+(m+1)^2+2(m+1)+2)$$ for $\ell=2n+1, n \ge 7$, and $$ \frac{(m+1)^n-(m+1)}{(m+1)-1}\ - ((m+1)^4+(m+1)^3+(m+1)^2+(m+1)+3)$$ for $\ell=2n, n \ge 9$.
\end{corollary}
Note that the $+1$ comes from the fact that the crucial lexicographically maximal word (from \cref{prop:lazardstopword}) is not removed until one step after the $Y_i$ corresponding to that word.

We can find more codes made up of Nyldon words by using Lazard sets.

\begin{theorem}\label{thm:kraft}[Kraft-McMillan Inequality] \cite{kraft, mcmillan}
For a code $\{s_1, s_2, \dots\}$ over an alphabet of size $r$, we have
$$ \sum_i r^{-|s_i|} \le 1.$$
\end{theorem}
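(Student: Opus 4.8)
The plan is to reproduce McMillan's classical argument, deriving the inequality purely from the unique-decodability condition built into \cref{def:code}. The first move is to reduce to a finite code. Any subset of a code is again a code: an ambiguous factorization using only the words $s_1,\dots,s_k$ would already contradict \cref{def:code} for the full code. Hence if the inequality holds for every finite subset $\{s_1,\dots,s_k\}$, then taking the supremum over $k$ of the partial sums shows it holds for the whole (possibly infinite) code. So I may assume the code is finite, and let $L$ denote its maximum codeword length.

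The heart of the argument is to estimate $S^n$, where $S=\sum_i r^{-\ell_i}$ and $n$ is an arbitrary positive integer. Expanding the product,
\[
S^n = \sum_{(i_1,\dots,i_n)} r^{-(\ell_{i_1}+\cdots+\ell_{i_n})},
\]
where the sum ranges over all $n$-tuples of indices. Grouping the tuples by their total length $t=\ell_{i_1}+\cdots+\ell_{i_n}$ and writing $N_t$ for the number of $n$-tuples of total length $t$, this becomes $S^n=\sum_t N_t\,r^{-t}$.

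The key step, and the only place the code hypothesis enters, is the bound $N_t\le r^t$. Each $n$-tuple $(s_{i_1},\dots,s_{i_n})$ of total length $t$ determines the concatenated word $s_{i_1}\cdots s_{i_n}$, a word of length $t$ over an alphabet of size $r$. By \cref{def:code}, distinct tuples yield distinct words, so this assignment is injective into the set of length-$t$ words, of which there are at most $r^t$. Therefore $S^n=\sum_t N_t\,r^{-t}\le\sum_t r^t\,r^{-t}\le nL$, since $t$ runs over at most $nL$ integer values (between $n$ and $nL$) and each term contributes at most $1$. Taking $n$th roots gives $S\le (nL)^{1/n}$, and letting $n\to\infty$ forces $S\le 1$, because $(nL)^{1/n}\to 1$.

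I expect the main conceptual obstacle to be precisely the count $N_t\le r^t$: it is tempting to try to bound the number of codewords of each individual length directly, but that fails for codes that are not prefix codes. The power trick is exactly what converts the global unique-decodability hypothesis of \cref{def:code} into a clean per-length count, and once that injectivity is in hand, the remaining estimate and limit are routine bookkeeping.
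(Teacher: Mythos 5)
Your argument is correct, but there is nothing in the paper to compare it against: the paper states the Kraft--McMillan inequality as a classical result, citing Kraft and McMillan, and gives no proof at all (it is used only as an ingredient in \cref{thm:lazardkraft}). What you have written is the standard McMillan counting argument, and every step is sound: a subset of a code is again a code, so the reduction to finite codes is legitimate; the expansion of $S^n$ and the grouping by total length $t$ are routine; the bound $N_t \le r^t$ is exactly where the unique-decodability hypothesis of \cref{def:code} enters, via injectivity of the map from $n$-tuples of codewords to their concatenations; and the polynomial bound $S^n \le nL$ against the exponential growth of $S^n$ for $S>1$ closes the argument. One small point worth making explicit: since \cref{def:code} allows arbitrary subsets of $A^*$, you should observe that a code can never contain the empty word $\varepsilon$ (otherwise the tuples $(s)$ and $(s,\varepsilon)$ concatenate to the same word), which is what guarantees every $\ell_i \ge 1$, hence $n \le t \le nL$, so that your count of at most $nL$ values of $t$ is valid. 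With that remark added, your proof is a complete and self-contained substitute for the citation.
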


\begin{theorem}\label{thm:lazardkraft}
Fix a positive integer $n$. For $F$ the set of Nyldon words and $F \cap A^{\le n} = \{ u_1, u_2, \dots, u_k\}$ with $u_1 <_{\text{lex}} u_2 <_{\text{lex}} \cdots <_{\text{lex}} u_k$, let $Y_i$ be the sequence of Lazard sets defined earlier. Note that each $Y_i$ besides $Y_1$ is infinite. Then, each $Y_i$ forms a code and satisfies the equality case of the Kraft-McMillan inequality.
\end{theorem}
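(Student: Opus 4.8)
The plan is to induct on $i$, establishing at each step both that $Y_i$ is a code and that $\sum_{w \in Y_i} |A|^{-|w|} = 1$. Write $r = |A| \ge 2$. The base case is $Y_1 = A$: single letters trivially form a code, and $\sum_{w \in A} r^{-1} = r \cdot r^{-1} = 1$. For the inductive step I would isolate the following purely combinatorial fact as a lemma: \emph{if $F$ is a code and $u \in F$, then $G := (F \setminus \{u\})u^*$ is again a code.} Granting the lemma, since the Nyldon words form a right Lazard set we have $u_{i-1} \in Y_{i-1}$ by \cref{def:lazard}, so $Y_i = (Y_{i-1} \setminus \{u_{i-1}\})u_{i-1}^*$ is a code by the lemma applied to the code $Y_{i-1}$ (the inductive hypothesis).

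The heart of the argument, and the step I expect to be the main obstacle, is the lemma, which I would prove by passing to $F$-factorizations. Every element of $G$ has the form $w u^j$ with $w \in F \setminus \{u\}$ and $j \ge 0$, and its evident $F$-factorization is $(w, u, u, \dots, u)$ with $j$ copies of $u$, which \emph{begins with the non-$u$ factor} $w$. Hence any $G$-factorization of a word $x$ expands into an $F$-factorization of $x$, and since $F$ is a code this $F$-factorization is unique. It then remains to check that the $G$-factorization can be \emph{recovered} uniquely from the $F$-factorization: reading $(f_1, f_2, \dots, f_t)$ from the left, each $G$-factor is forced to be a maximal block consisting of one non-$u$ word followed by the ensuing run of $u$'s. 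This grouping is uniquely determined, and exists precisely when $f_1 \ne u$ (which holds whenever $x \in G^*$), so the $G$-factorization is unique and $G$ is a code.

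For the Kraft equality I would first invoke the code property just established to note that the map $(w,j) \mapsto w u_{i-1}^j$ is injective on $(Y_{i-1} \setminus \{u_{i-1}\}) \times \mathbb{Z}_{\ge 0}$: if $w u_{i-1}^{\,j} = w' u_{i-1}^{\,j'}$, the two $Y_{i-1}$-factorizations $(w, u_{i-1}, \dots, u_{i-1})$ and $(w', u_{i-1}, \dots, u_{i-1})$ must coincide, forcing $w = w'$ and $j = j'$. This legitimizes summing over pairs, and as $r \ge 2$ gives $r^{-|u_{i-1}|} < 1$ the geometric series converges (this is also why each $Y_i$ with $i \ge 2$ is an infinite code with a summable length sequence):
\begin{equation*}
\sum_{v \in Y_i} r^{-|v|} = \Bigl(\sum_{w \in Y_{i-1} \setminus \{u_{i-1}\}} r^{-|w|}\Bigr)\sum_{j \ge 0} r^{-j|u_{i-1}|} = \bigl(1 - r^{-|u_{i-1}|}\bigr)\cdot \frac{1}{1 - r^{-|u_{i-1}|}} = 1,
\end{equation*}
where the middle equality uses the inductive hypothesis $\sum_{w \in Y_{i-1}} r^{-|w|} = 1$. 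This closes the induction. Note that \cref{thm:kraft} by itself only guarantees the inequality $\le 1$ for any code; the content here is that the direct computation yields equality, so that each $Y_i$ realizes the equality case.
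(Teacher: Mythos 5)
Your proof is correct and takes essentially the same approach as the paper's: induction on $i$, with the code property obtained by regrouping the unique $Y_{i-1}$-factorization (each new codeword being one non-$u_{i-1}$ word followed by a run of $u_{i-1}$'s), and the Kraft equality via the identical geometric-series computation. The only difference is rigor --- you isolate and prove the regrouping lemma and the injectivity of $(w,j) \mapsto w u_{i-1}^{j}$, steps the paper dispatches with ``clearly.''
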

\begin{proof}
We use induction. The base case $Y_1=\{0, 1, \dots, m\}$ is clearly a code satisfying the equality case of the Kraft-McMillan equality. Suppose that $Y_i=\{a_1, a_2, \cdots \}$ satisfies these conditions, and $Y_{i+1}=(Y_i \backslash a_j)a_j^*$ for some $j$, where $a_j=u_i$ in the definition of a right Lazard set. Every word $w$ that can be formed from the words in $Y_i$ can clearly be formed in at most one way from the words in $Y_{i+1}$, obtained by putting the non-$a_j$ words forming $w$ together, and adding the appropriate amount of $a_j$'s in between. So, $Y_{i+1}$ is a code.
Now, we need to show that $Y_{i+1}$ satisfies the equality case of the Kraft-McMillan inequality. Since $Y_i$ satisfies the equality case of the Kraft-McMillan inequality, we have
$$
\sum_{a_k \in Y_i} \frac{1}{(m+1)^{|a_k|}}=1.
$$

By deleting $a_j$, we get a geometric series sum of \begin{align*} &\sum_{\substack{a_k \in Y_i \\ a_k \neq a_j}} \frac{1}{(m+1)^{|a_k|}} \cdot \left( 1+\frac{1}{(m+1)^{|a_j|}} + \frac{1}{(m+1)^{2|a_j|}} + \cdots \right) \\=& \sum_{\substack{a_k \in Y_i \\ a_k \neq a_j}} \frac{1}{(m+1)^{|a_k|}} \Bigg/ \left(1-\frac{1}{(m+1)^{|a_j|}}\right)\\=&\frac{1-\frac{1}{(m+1)^{|a_j|}}}{1-\frac{1}{(m+1)^{|a_j|}}}\\=&1.
\end{align*}
\end{proof}

Notably, the proof of \cref{thm:lazardkraft} does not depend on the choice of $u_i$ in each step of the Lazard procedure.

\section{Lyndon Words}
Recall that Lyndon words are those words $w$ that cannot be factorized into a sequence of smaller Lyndon words $w_1w_2\cdots w_k$ with $w_1 \ge_{\text{lex}} w_2 \ge_{\text{lex}} \cdots \ge_{\text{lex}} w_k$. We give a positive answer to a question posed in \cite{nyldon}.

\begin{theorem}\label{thm:lyndonstatement}
If a word $w$ is lexicographically smaller than all of its Lyndon proper suffixes, then it is Lyndon.
\end{theorem}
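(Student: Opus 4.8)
The plan is to prove the contrapositive: if $w$ is not Lyndon, then some Lyndon proper suffix of $w$ is lexicographically at most $w$. So assume $w$ is not Lyndon (and not a single letter, since single letters are Lyndon). By \cref{def:nyldon}, $w$ admits a Lyndon factorization $(w_1, w_2, \dots, w_k)$ with $k \ge 2$ and $w_1 \ge_{\text{lex}} w_2 \ge_{\text{lex}} \cdots \ge_{\text{lex}} w_k$. The idea is to use the last factor $w_k$, which is a Lyndon proper suffix of $w$, as the candidate witness, and to show that $w_k <_{\text{lex}} w$ (or at worst $w_k \le_{\text{lex}} w$), so that $w$ is not strictly smaller than all of its Lyndon proper suffixes.

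First I would set $s = w_k$, a Lyndon word that is a proper suffix of $w$, and write $w = w_1 w_2 \cdots w_{k-1} s$. The key comparison is between $w$ and $s$. Since $s$ is the last (hence lexicographically smallest) factor in the nonincreasing Lyndon factorization, I expect $s \le_{\text{lex}} w_1$, and I want to leverage the prefix structure. The natural approach is to compare $w$ with its suffix $s$ directly: either $s$ is a prefix of $w$ or it is not. If $s$ is not a prefix of $w$, then comparing $w$ and $s$ letter by letter at the first point of difference settles the lexicographic order, and I would argue the difference forces $s \le_{\text{lex}} w$. If $s$ is a prefix of $w$, then $w = s t$ for a nonempty $t$, and since $s$ is a proper prefix of $w$, the prefix rule gives $s <_{\text{lex}} w$ immediately.

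The main obstacle will be the case where $s$ is not a prefix of $w$: here I need to control the relationship between the leading letters of $w$ (which are the leading letters of $w_1$) and those of $s = w_k$. Because the factorization is nonincreasing, $w_1 \ge_{\text{lex}} w_k = s$, and I would like to push this into a statement comparing $w$ itself (whose prefix is $w_1$) against $s$. The delicate point is that $w_1 \ge_{\text{lex}} s$ does not automatically give $w \ge_{\text{lex}} s$ when $w_1$ is a prefix of $s$ or vice versa; I would need to examine whether one of $w_1, s$ is a prefix of the other and handle the prefix-comparison subtlety carefully, possibly invoking the standard fact that for Lyndon words $u \ge_{\text{lex}} v$ with $v$ not a prefix of $u$ the order is determined by a genuine letter mismatch. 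A clean alternative, which I would try first, is to induct on $k$ or on $|w|$ and reduce to the two-factor situation, using the recursive definition directly rather than any property proved only for Nyldon-like sets.

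Finally, I would assemble these cases: in every case I obtain a Lyndon proper suffix (namely $s = w_k$, or a suffix furnished by the inductive step) that is lexicographically no larger than $w$, contradicting the hypothesis that $w$ is strictly smaller than all its Lyndon proper suffixes. This establishes the contrapositive and hence the theorem. I expect the entire argument to rest only on \cref{def:nyldon} and elementary properties of lexicographic order, in keeping with the paper's stated goal of proving this ``using only the recursive definition.''
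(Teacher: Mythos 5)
Your proposal is correct and takes essentially the same approach as the paper: argue the contrapositive, take a nonincreasing Lyndon factorization $w = w_1 w_2 \cdots w_k$ with $k \ge 2$, and use the last factor $w_k$ as the Lyndon proper suffix witnessing failure, via the chain $w_k \le_{\text{lex}} w_1 <_{\text{lex}} w$, where the last inequality holds because $w_1$ is a proper prefix of $w$. The prefix-versus-mismatch case analysis you flag as ``delicate'' is unnecessary: since $<_{\text{lex}}$ is a total order, transitivity applied to that chain finishes the proof at once, which is exactly how the paper concludes.
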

\begin{proof}
We will only use the recursive definition of Lyndon words to prove this implication.
Suppose $w$ is not Lyndon. Then, by the recursive definition of Lyndon words, $w$ has a factorization $w_1w_2\cdots w_k$ into Lyndon words with $w_1 \ge_{\text{lex}} w_2 \ge_{\text{lex}} \cdots  \ge_{\text{lex}} w_k$ and $k \ge 2$. But then $w_k$ is a Lyndon proper suffix of $w$ and $w >_{\text{lex}} w_1 \ge_{\text{lex}} w_k$, so $w$ is not lexicographically smaller than all of its Lyndon proper suffixes, and we are done.
\end{proof}

\section{Further Directions}
In \cite{nyldon}, Charlier et al. give the problem of describing the forbidden prefixes of Nyldon words (Open Problem 11). They also ask whether prefixes of Nyldon words must always be sesquipowers (or fractional powers) of Nyldon words (Open Problem 12). These problems both remain open, and we in general know very little about prefixes of Nyldon words.

We also wonder how much the bound $K \le \lfloor \log_2(|w|)\rfloor +1$ can be improved in Nyldon factorizing $w^k$.
\begin{question}
For a primitive word $w$ with Nyldon conjugate $n$, and large enough $k$ depending on $w$, what is the best bound $K$ such that the word $w^k$ can be factorized as $(p_1, p_2, \dots, p_i, n, n, \dots, n,$ $ s_1, s_2, \dots, s_j),$ where there are $k-K$ copies of $n$ in the middle? In particular, does a constant bound on $K$ exist? What is the best bound $K$ for a general Nyldon-like set?
\end{question}

So far, we have not found any word $w$ with a value of $K$ more than $4$ in the case of Nyldon words. Note that $w=01111011011111011110111$ has $K=4$ in \cref{exa:infection}. It may be interesting to find the Nyldon-like set with the highest asymptotic value of $K$. Perhaps a bound can also be found for other kinds of Hall sets, such as Viennot sets.
\begin{question}
Fix a Viennot set $(V, \prec)$. For a primitive word $w$ with conjugate $n$ in $V$, and large enough $k$ depending on $w$, does there exist a $K$ such that the word $w^k$ can be factorized as $(p_1, p_2, \dots, p_i, $ $n, n, \dots, n, s_1, s_2, \dots, s_j)$ for $k\ge K$ where there are $k-K$ copies of $n$ in the middle?
\end{question}
Note that $K=1$ for Lyndon words \cite{nyldon}, the prototypical example of a Viennot set. Finally, we ask about the Lazard procedure.
\begin{question}
What is the general distribution on ``finishing times'' for Lazard procedures that generate Hall sets?
\end{question}

\section{Acknowledgments}
This research was funded by NSF/DMS grant 1659047 and NSA grant H98230-18-1-0010. The author would like to thank Prof.\ Joe Gallian for organizing the Duluth REU where this research took place, as well as advisors Aaron Berger and Colin Defant. The author would also like to thank Amanda Burcroff and Sumun Iyer for providing helpful comments on drafts of this paper. The author would also like to thank Spencer Compton for pointing the author toward helpful resources discussing the complexity of certain algorithms. Finally, the author would like to thank the reviewers for several helpful comments.

\bibliographystyle{abbrv}
\bibliography{biblio}

\end{document}